\newtheorem{theorem}{Theorem} 
\newtheorem{lemma}[theorem]{Lemma}   
\newtheorem{corollary}[theorem]{Corollary}
\newtheorem*{PF}{Perron-Frobenius Theorem}
\newtheorem*{mainTheorem}{Main Theorem}
\newtheorem*{thm}{Theorem}
\newcommand{\R}{\mathbb{R}}
\newcommand{\Z}{\mathbb{Z}}
\newcommand{\Q}{\mathbb{Q}}
\newcommand{\C}{\mathbb{C}}
\newcommand{\N}{\mathbb{N}}
\newcommand{\eps}{\epsilon}
\newcommand{\goesto}[1]{\xrightarrow{#1}}
\newcommand{\inv}{^{-1}}
\newcommand{\cross}{\times}
\newcommand{\Sp}{\mathrm{Sp}\left(2g\right)}
\newcommand{\Spq}{\mathrm{Sp} \left(2g, \Q \right)}
\newcommand{\SpL}{\mathrm{Sp} \left(2g, \Z, L\right)}
\newcommand{\SpJ}{\mathrm{Sp} \left(2g, \Z, J \right)}
\newcommand{\SpLq}{\mathrm{Sp} \left(2g, \Q, L \right)}
\newcommand{\Ug}{\mathrm{U}\left(g\right)}
\newcommand{\Sg}{\mathbf{S}^{2g-1}}
\newcommand{\Rg}{\mathbb{R}^{2g}}
\newcommand{\GL}{\mathrm{GL}}
\newcommand{\SL}{\mathrm{SL}}
\newcommand{\Og}{\mathrm{O} \left(2g \right)}
\title{Achievable spectral radii of symplectic Perron-Frobenius matrices}
\author{R. Ackermann}
\begin{document}

\maketitle

\begin{abstract}
A pseudo-Anosov surface automorphism $\phi$ has associated to it an algebraic unit $\lambda_\phi$ called the dilatation of $\phi$.  It is known that in many cases $\lambda_\phi$ appears as the spectral radius of a Perron-Frobenius matrix preserving a symplectic form $L$.  We investigate what algebraic units could potentially appear as dilatations by first showing that every algebraic unit $\lambda$ appears as an eigenvalue for some integral symplectic matrix.  We then show that if $\lambda$ is real and the greatest in modulus of its algebraic conjugates and their inverses, then $\lambda^n$ is the spectral radius of an integral Perron-Frobenius matrix preserving a prescribed symplectic form $L$.  An immediate application of this is that for $\lambda$ as above, $\log\left(\lambda^n\right)$ is the topological entropy of a subshift of finite type.
\end{abstract}

\section{Introduction}\label{S:Introduction}

We recall that a self-homeomorphism $\phi$ of a surface $F$ with $\chi\left(F\right) < 0$ is called \emph{pseudo-Anosov} 
if it leaves invariant a pair of transverse, singular, measured foliations $\mathfrak{F}^s$, $\mathfrak{F}^u$ called the 
stable and unstable foliations, respectively.  Associated to such a map is an algebraic unit $\lambda_\phi$  called the \emph{dilatation} of $\phi$
which measures how the map stretches $\mathfrak{F}^s$ and shrinks $\mathfrak{F}^u$.  The dilatation encodes a variety of dynamical 
properties, for example the topological entropy of $\phi$ is $log(\lambda_\phi)$. Recently there has been a great deal of interest in 
the dilatations of pseudo-Anosov automorphisms, including a recent paper of Farb, Leininger, and Margalit which
explores connections between low dilatation pseudo-Anosovs and 3-manifolds (see~\cite{FLM}). More generally, the question of
which dilatations can be realized by some pseudo-Anosov has received attention (see for example~\cite{F} and~\cite{LT}).

There are a number of  ways to find the dilatation $\lambda_\phi$ of a pseudo-Anosov $\phi$.  By taking suitable branched
coverings,  $\lambda_\phi$  can be made to appear as the largest  root of an integral symplectic matrix.  In fact, in~\cite{P} Penner describes a symplectic pairing which is preserved by the action of $\phi$ by an integral Perron-Frobenius matrix.  This matrix encodes the action of $\phi$ on a train track $\tau$ which carries it, and the dilatation appears as the spectral radius of the matrix (for more on train tracks and pseudo-Anosovs, see~\cite{BH},~\cite{P}, and ~\cite{FM}).  Different train tracks and different pseudo-Anosovs will have different symplectic pairings associated to them.  The pairing in general may have degeneracies, but in large classes of examples the pairing is non-degenerate (and in fact a symplectic form).

  The motivation for this paper came from thinking about what algebraic units appear as spectral radii of integral symplectic 
Perron-Frobenius  matrices, and hence could potentially appear as dilatations of pseudo-Anosov automorphisms.  Additionally, we want to be able to construct these matrices to preserve a prescribed symplectic form.

Let $\lambda \in \R$ be an algebraic unit, that is, $\lambda$ is the root of a polynomial which is irreducible over the integers and of the form $p\left(t\right) = t^g + a_{g}t^{g-1} + ... + a_2t \pm 1$.  If also $| \lambda | > 1$, $\lambda$ has algebraic multiplicity $1$, and for all other roots $\omega$ of $p\left(t\right)$ we have $| \lambda + \lambda^{-1} | > | \omega + \omega^{-1} |$ we will say $\lambda$ is a \emph{Perron unit}.  From $p\left(t\right)$, we can form a self-reciprocal (or palindromic) polynomial $q\left(t\right) = t^g p\left(t\right) p\left(t^{-1}\right)$ for which $\lambda$ and $\lambda^{-1}$ are both roots.  If $\lambda$ is a Perron unit, then it is the unique largest root of $q\left(t\right)$.

We want to find Perron units which appear as the spectral radius of a symplectic Perron-Frobenius matrix.  In particular, we will prove: \\
\begin{mainTheorem}
Let $\lambda$ be a Perron unit, and let $L$ be any integral symplectic form.

Then for some $n \in \N$, $\lambda^n$ is the spectral radius of an integral Perron-Frobenius matrix which preserves the symplectic form $L$.
\end{mainTheorem}

The proof is constructive enough that it is possible to find a matrix for $\lambda$ with the assistance of a computer.




The rest of this paper is divided into three parts.  In the first part, we give a canonical form for integral symplectic matrices so that it is easy to construct a matrix preserving a given symplectic form and having a given self-reciprocal polynomial as its characteristic polynomial.  In the second part, we show how to conjugate a power of these matrices to be Perron-Frobenius.  In particular, we prove:
\\
\begin{thm}
Let $M$ be an integral matrix with a unique, real eigenvalue of largest modulus greater than 1.  Suppose also that this eigenvalue has algebraic multiplicity 1, and that $M$ preserves a symplectic form $L$.

Then $\exists n \in \N$ and $B\in\GL\left(2g\right)$ such that $B^{-1} M^n B$ is an integral, Perron-Frobenius matrix.  Furthermore, $B^{-1} M^n B$ will also preserve $L$.
\end{thm}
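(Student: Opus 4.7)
The plan is to exploit the spectral gap between $\lambda$ and the remaining eigenvalues of $M$ to make $M^n$ entrywise positive after conjugation by a carefully chosen $B\in\SpL$. Replacing $M$ by $M^2$ if necessary so that $\lambda>0$, let $v$ be a right $\lambda$-eigenvector and $w^T$ the dual left $\lambda$-eigenvector, normalized so that $w^Tv=1$. Since $\lambda$ is simple and strictly larger in modulus than every other eigenvalue of $M$, we have the spectral decomposition
\[
M^n = \lambda^n\, v\,w^T + E_n,\qquad \|E_n\|\le C\rho^n,
\]
where $\rho<\lambda$ is the spectral radius of $M$ restricted to the invariant complement $\{u:w^Tu=0\}$.

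For any invertible $B$, expanding entrywise gives
\[
(B^{-1}M^nB)_{ij} = \lambda^n\,(B^{-1}v)_i\,(w^TB)_j + O(\rho^n).
\]
Hence, if I can exhibit $B\in\SpL$ such that both $B^{-1}v$ and $w^TB$ have every entry strictly positive, the rank-one leading term is entrywise positive and grows like $\lambda^n$, dominating $\|E_n\|=O(\rho^n)$. Choosing $n$ sufficiently large then makes $B^{-1}M^nB$ entrywise positive, hence primitive and so Perron--Frobenius. Integrality of the conjugate and preservation of $L$ follow automatically from $B,M\in\SpL$.

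The technical crux is the construction of $B$. Writing $b_j := Be_j$, the sign conditions require an integer $L$-symplectic basis $\{b_j\}$ of $\Z^{2g}$ such that (i) every $b_j$ lies in the open half-space $H_+ := \{u : w^Tu > 0\}$, and (ii) $v$ lies in the interior of the positive cone spanned by $\{b_j\}$. These define a non-empty open region in $\mathrm{Sp}(2g,\R,L)$, realized, for example, by any real symplectic basis clustered near a positive multiple of $v$. I would produce an integer point constructively: begin with an integer $L$-symplectic basis in Smith normal form; on each hyperbolic pair $(c_i,c_{g+i})$ the integer symplectic moves that negate the pair or swap-and-flip it realize all four sign combinations of $(w^Tc_i,w^Tc_{g+i})$, so the $(+,+)$ pattern can be achieved uniformly, establishing (i). To secure (ii), select an integer vector $u$ whose direction lies in a narrow cone about $v$ (such $u$ exist by Diophantine approximation) and iterate the integer symplectic transvection $T_u : x \mapsto x + L(x,u)u$; since $T_u^k(b_j) = b_j + kL(b_j,u)u$, for large $k$ each basis vector is steered into a large positive multiple of $u\approx v$, thereby placing $v$ inside the new positive cone.

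The main obstacle I anticipate is verifying that steps (i) and (ii) remain compatible: the transvections used for (ii) can disturb the signs established in (i), so the full argument will interleave sign corrections with tilting, or else rely on a density argument in $\mathrm{Sp}(2g,\R,L)$ to assert that the lattice $\SpL$ meets the desired open region. Once $B$ is fixed, one chooses $n$ large enough that the smallest entry of the positive matrix $\lambda^n(B^{-1}v)(w^TB)$ exceeds the largest absolute entry of $B^{-1}E_nB$, which is possible since $\rho<\lambda$. For any such $n$, the matrix $B^{-1}M^nB$ is integer, preserves $L$, and has strictly positive entries, completing the proof.
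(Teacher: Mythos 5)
The overall strategy matches the paper's: reduce the problem to producing a basis $\{b_j\}$ lying in the half-space $W^+=\{u : w^T u>0\}$ and spanning a cone with $v$ in its interior, so that a high power of $M$, written in this basis, becomes entrywise positive thanks to the spectral gap; the paper packages this step as Lemma~\ref{L:PFBasis}, and your rank-one decomposition $M^n=\lambda^n vw^T+E_n$ is an equivalent formulation.

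The gap is in the construction of $B$. You try to produce an \emph{integer} $L$-symplectic $B$ directly (sign corrections on hyperbolic pairs, then iterated integer transvections), and you yourself flag that these steps may interfere, falling back on the assertion that ``the lattice $\SpL$ meets the desired open region.'' That fallback fails: $\SpL$ is a discrete subgroup of $\mathrm{Sp}(2g,\R,L)$, not a dense one, so there is no general reason for it to intersect a given non-empty open set, and your explicit moves are not shown to terminate with both (i) and (ii) holding simultaneously. The paper avoids this entirely: it builds $B'$ only in $\mathrm{Sp}(2g,\Q,L)$, using density of the \emph{rational} symplectic group (Lemma~\ref{L:SymplecticDense}) together with a unitary rotation that preserves the spherical diameter of the small cone of columns (Lemmas~\ref{L:UnitaryIntersection} and~\ref{L:UnitaryTransitive}); it then clears denominators to get an integer but non-symplectic $B=\alpha B'$, and recovers integrality of the conjugate by a congruence-subgroup trick, replacing $M$ by $M^m$ with $M^m\equiv I\pmod{\det B}$ so that $B^{-1}M^mB$ is integral, while $L$ is still preserved because conjugation by a scalar multiple of a symplectic matrix preserves the form. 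In short, the substantive content of the paper's proof is exactly the machinery needed to avoid producing $B\in\SpL$, which your argument takes for granted; to repair it you would need either a genuine proof that $\SpL$ meets the required open cone of bases (not density, which is false), or to adopt the rational-plus-congruence route.
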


In the final section, we give an immediate application of some of these results to subshifts of finite type.  Given an integral Perron-Frobenius matrix, it is always possible to build a larger Perron-Frobenius matrix whose entries are all either 0 or 1.  This new matrix will have the same spectral radius as the original one, so the results above show that every Perron unit appears as the spectral radius of a such a matrix.  In fact, up to multiplaction by $t^k$, the characteristic polynomial of the new matrix is the same as the one it was built from.  We include this discussion both as a simple application and because it may also be useful in studying pseudo-Anosovs.


Although the motivation for this paper was to study pseudo-Anosov maps, there are applications of these results outside the study of surface automorphisms.  See for example~\cite{H}.  To the author's knowledge these results are unknown, though some may seem like basic facts.

\subsection{Acknowledgements}

The author would like to thank his advisor, Darren Long, for numerous helpful conversations while working on the results of this paper.  He would also like to thank Jon McCammond and Tom Howard for insightful conversations about the symplectic group and Perron-Frobenius matrices, respectively.

\section{A Canonical Form for Self-Reciprocal Polynomials}\label{S:CanonicalSympForm}

In this section, we establish a canonical form for integral matrices with self-reciprocal characteristic polynomial.  These matrices preserve a symplectic form which is standard in the sense that it arises naturally from the study of surface automorphisms.

A polynomial $p\left(t\right)$ over the integers is \emph{self-reciprocal} if its coefficients are palindromic, i.e, $p\left(t\right)$ has the form

\begin{equation}\label{E:Palindromic}
p\left(t\right) = 1 - a_2 t - a_3 t^2 - ... - a_{g+1} t^g - a_g t^{g+1} - ... - a_2 t^{2g - 1} + t^{2g}
\end{equation}


Let $\Sp$ be the symplectic group over $\mathbb{R}^{2g}$.  Up to change of basis, we may represent any non-degenerate, skew-symmetric bilinear form by either

$$
J = \left(\begin{array}{ccccc}0 & 1 &  &  & 0 \\-1 & 0 &  &  &  \\ &  & \ddots &  &  \\ &  &  & 0 & 1 \\0 &  &  & -1 & 0\end{array}\right)
$$

or

$$
K = \left(\begin{array}{cc}0 & I \\-I & 0\end{array}\right)
$$

Where $I$ represents the $g \cross g$ identity matrix.  We specify $J$ because it is the symplectic form we usually think of when considering the action of a surface automorphism on the first homology group of the surface.  We include $K$ because it is easier to work with in obtaining the results of this section.

We now define two standard forms for a matrix which has the self-reciprocal polynomial $p\left(t\right)$ as its characteristic polynomial.  We will also show that each preserves one of the standard symplectic forms above.  The first canonical form, denoted $A$ below, preserves $J$ (that is, $A^T J A = J$).

$$
A = \left(\begin{array}{ccccccccc}
0 & \hdots &  &  &  &  & \hdots & 0 & -1 \\
0 & a_2 & 0 & a_3 & \hdots & 0 & a_g & 1 & a_{g+1} \\
1 & 0  &  &  &  &  &  &  & a_2 \\
0 & 1 &  &  &  &  &  &  & 0 \\
\vdots &  & \ddots &  &  &  &  &  & a_3 \\
 &  &  &  &  &  &  &  & \vdots \\
  &  &  &  & \ddots  &  &  &  & 0 \\
   &  &  &  &  & 1 & 0 &  & a_g \\
   0 & \hdots &  &  & \hdots & 0 & 1 & 0 & 0\end{array}\right)
$$


By performing the change of basis which carries $J$ to $K$, we obtain a second canonical form, denoted $B$, which preserves $K$.

$$
B = \left(\begin{array}{cccccccc}
0 & \hdots &  &  &  &  & \hdots & -1 \\
1 &  &  &  &  &  &  & a_2 \\
& \ddots &  &  &  &  &  & a_3 \\
&  & \ddots &  &  &  &  & \vdots \\
&  &  & 1 & a_2 & a_3 & \hdots & a_{g+1} \\
&  &  &  & \ddots &  &  & 0 \\ 
&  &  &  &  & \ddots &  & \vdots \\
0 &  &  &  &  &  & 1 & 0
\end{array}\right)
$$
The proofs of this section could be considered tedious, and the uninterested reader should have no problems skipping to section~\ref{S:ChangeBasisToPF} after first reading theorem~\ref{T:AlgUnitsAreSymplectic}.
\\
\begin{lemma}\label{L:PreservesForm}
$A$ preserves the symplectic form $J$ and $B$ preserves the symplectic form $K$.
\end{lemma}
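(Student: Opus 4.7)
The plan is to verify $A^T J A = J$ and $B^T K B = K$ by a direct column-pair computation, handling the $B$ case first because $B$ has the simpler shift structure — all but two of its columns are shifted standard basis vectors, and only two columns carry the coefficients $a_i$.

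Writing $K = \begin{pmatrix} 0 & I \\ -I & 0 \end{pmatrix}$ in block form, the pairing $v^T K w$ for $v = (v_1, v_2)^T$, $w = (w_1, w_2)^T$ in $\R^g \oplus \R^g$ equals $v_1^T w_2 - v_2^T w_1$. Label the columns of $B$ as $b_1, \ldots, b_{2g}$. Most pairs $(b_i, b_j)$ consist of two shifted standard basis vectors with essentially disjoint supports; for such pairs the value of $b_i^T K b_j$ is read off immediately as $0$ or $\pm 1$, and one checks it matches the corresponding entry of $K$. The substantive cases are pairs where at least one column is \emph{coefficient-bearing} — the last column of $B$ (carrying $-1, a_2, \ldots, a_{g+1}$) and the "middle" column through which the $a_k$'s are distributed across the two blocks. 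In these cases $b_i^T K b_j$ is an explicit linear combination of the $a_k$'s, and the palindromic symmetry of $p(t)$, which dictates the precise placement of the $a_k$'s, is what forces these combinations to cancel in pairs and leave $0$ (or the appropriate $\pm 1$).

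Once $B^T K B = K$ is in hand, the identity $A^T J A = J$ follows formally rather than by another grind. Let $P$ denote the explicit change of basis that carries $J$ to $K$, so that $P^T J P = K$. By construction $B$ is $A$ rewritten in the new basis, i.e., $A = PBP^{-1}$, and then
\[
A^T J A \;=\; P^{-T} B^T P^T J P B P^{-1} \;=\; P^{-T}(B^T K B) P^{-1} \;=\; P^{-T} K P^{-1} \;=\; J,
\]
where the last equality is the inverse of $P^T J P = K$. So a single computation handles both matrices.

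The main obstacle is the bookkeeping in the "both columns coefficient-bearing" case: tracking which $a_k$ sits in the upper $g$-block versus the lower $g$-block of each column, and checking that after applying $K$ the resulting quadratic form in the $a_k$'s collapses to $0$ by palindromic pairing of terms. Partitioning the verification into three cases — both simple shifts; one shift and one coefficient-bearing; both coefficient-bearing — and drawing each column in its block decomposition makes the cancellations visible and keeps the sign bookkeeping manageable.
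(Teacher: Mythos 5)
Your approach matches the paper's: verify $B^T K B = K$ directly by pairing columns, then transport the result to $A$ via the explicit change of basis $P$ carrying $J$ to $K$ (the paper opens its proof with exactly this reduction — ``it suffices to show that $B$ preserves $K$''). The formal calculation $A^T J A = P^{-T}(B^T K B)P^{-1} = P^{-T} K P^{-1} = J$ is the right way to make that reduction precise.

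One caution on the bookkeeping: you describe $B$ as having ``only two columns'' carrying the coefficients $a_i$. That misreads the matrix. From the column action $Be_i = e_{i+1}$ for $i \leq g$, $Be_i = a_{i-g+1}e_{g+1} + e_{i+1}$ for $g+1 \leq i \leq 2g-1$, and $Be_{2g} = -e_1 + \sum_{i=2}^{g+1}a_i e_i$, you can see that \emph{every} column from $g+1$ through $2g$ is coefficient-bearing: columns $g+1,\dots,2g-1$ each contribute a single $a_k$ to the $e_{g+1}$ slot, and column $2g$ carries the full vector of $a_i$'s in positions $1$ through $g+1$. So your ``both coefficient-bearing'' case covers roughly $\binom{g}{2}$ pairs, not one. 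This does not break your plan — the cancellations still go through, e.g.\ for $i,j$ in the range $g+1,\dots,2g-1$ the $e_{g+1}$ components of both columns get killed by $K$ and the pairing is $0$, and for $b_i$ against $b_{2g}$ the two copies of $a_{i-g+1}$ cancel with opposite signs — but your case partition should be reorganized with the correct count of coefficient-bearing columns before you grind out the verification.
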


\begin{proof}
It suffices to show that $B$ preserves $K$.  Let $\{ e_1, ... , e_{2g} \}$ denote the standard basis vectors for $\mathbb{R}^{2g}$.  We note that the action of $B$ on $e_i$ is:

\begin{align*}
& B e_i  = e_{i + 1}  & \mathrm{if} \  1 \leq i \leq g  \\
& B e_i  = a_{i - g + 1} e_{g+1} + e_{i+1} & \mathrm{if} \  g + 1 \leq i \leq 2g - 1  \\
& B e_{2g}  = -e_1 + \sum_{i = 2}^{g + 1} a_i e_i  &
\end{align*}

We now show that if $<~,~>$ is the bilinear form coming from $K$, $<B e_i, B e_k>\  =\  <e_i , e_k>$.  Since this is all computational, we will do only a few cases here.  A key observation to simplify calculations is that for $1 \leq i \leq g$ we have $< e_i, e_k > \neq 0$ if and only if $k = g + i$.  In particular,  $< e_i, e_{g + 1} > \neq 0$ if and only if $i = 1$.  

First we will let $ 1 \leq i \leq g$.  Then:

$$
<B e_i, B e_k>\  =\  < e_{i+1}, B e_k>\  = 
\begin{cases}
< e_{i + 1}, e_{k + 1} > & \mbox{if } 1 \leq k \leq g \\ 
< e_{i + 1}, a_{k - g + 1} e_{g+1} > + < e_{i + 1}, e_{k+1} > & \mbox{if } g + 1 \leq k \leq 2g - 1 \\
< e_{i + 1}, -e_1 > + < e_{i + 1}, \sum_{j = 2}^{g + 1} a_j e_j >  & \mbox{if } k = 2g 
\end{cases}
$$

But checking our form $K$, we see that 

$$
<B e_i, B e_k>\  =
\begin{cases}
0 & \mbox{if } 1 \leq k \leq g \\
0 + 1 & \mbox{if } k = g + i \mbox{ and }  g + 1 \leq k \leq 2g - 1  \\
0 + 0 & \mbox{if } k \neq g + i \mbox{ and }  g + 1 \leq k \leq 2g - 1  \\
1 + 0 & \mbox{if } i = g \mbox{ and } k = 2g \\
0 + 0 & \mbox{if } i \neq g \mbox{ and } k = 2g    
\end{cases}
$$

A slightly more complicated case occurs if we let $ g+1 \leq i \leq 2g-1 $ and $ k = 2g$.  Then:

\begin{align*}
< B e_i, B e_k > & = a_{i - g + 1} <e_{g+1}, B e_{2g} > + < e_{i+1}, B e_{2g} > \\
& = a_{i - g + 1} + 0 + 0 +  \sum_{j = 2}^{g + 1} a_j < e_{i+1}, e_j > \\
& = a_{i - g + 1} - a_{i - g + 1}  \\
& = 0
\end{align*}

The other cases are not more difficult than the two above.

\end{proof}

Now we will show that $A$ and $B$ both have characteristic polynomials of form (\ref{E:Palindromic}).
\\
\begin{lemma}\label{L:CharPoly}
The characteristic polynomials of $A$ and $B$ are both $p\left(t\right) = 1 - a_2 t - a_3 t^2 - ... - a_{g+1} t^g - a_g t^{g+1} - ... - a_2 t^{2g - 1} + t^{2g}$.
\end{lemma}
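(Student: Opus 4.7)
\textit{Plan.} Since $B$ is obtained from $A$ by the change of basis carrying $J$ to $K$, the two matrices are similar and share a characteristic polynomial; it therefore suffices to compute $\det(tI - B)$.

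The strategy is to exhibit $B$ as similar to the operator ``multiplication by $t$'' on the cyclic $\R[t]$-module $\R[t]/(p(t))$, whose characteristic polynomial is tautologically $p(t)$. Define polynomials $f_1, \ldots, f_{2g}$ by $f_i(t) = t^{i-1}$ for $1 \leq i \leq g+1$ and
\[
f_{g+k}(t) = t^{g+k-1} - \sum_{j=2}^{k} a_j t^{g+k-j}
\]
for $2 \leq k \leq g$. Each $f_i$ is monic of degree $i-1$, so $\{f_1, \ldots, f_{2g}\}$ is a basis for polynomials of degree less than $2g$, and hence for $\R[t]/(p(t))$. Let $\phi \colon \Rg \to \R[t]/(p(t))$ be the linear isomorphism defined by $\phi(e_i) = f_i$, and let $T$ denote multiplication by $t$ on the quotient.

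The core calculation is to verify the intertwining $\phi \circ B = T \circ \phi$ on each standard basis vector. For $1 \leq i \leq g$ this is immediate, since $\phi(Be_i) = \phi(e_{i+1}) = t^i = t f_i$. For $i = g+k$ with $1 \leq k \leq g-1$, the identity $Be_{g+k} = a_{k+1} e_{g+1} + e_{g+k+1}$ combined with the telescoping in the definition of $f_{g+k+1}$ makes the $a_{k+1} t^g$ contributions cancel, and both $\phi(Be_{g+k})$ and $t f_{g+k}$ reduce to the same polynomial of degree $g+k$. The crucial case is $i = 2g$: using $Be_{2g} = -e_1 + a_2 e_2 + \ldots + a_{g+1} e_{g+1}$ together with the explicit form of $f_{2g}$, a direct expansion shows that $t f_{2g} - \phi(Be_{2g})$ is precisely $p(t)$, which vanishes in $\R[t]/(p(t))$.

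With the intertwining established, $B$ is conjugate to $T$ and the two share a characteristic polynomial. Since $1 \in \R[t]/(p(t))$ is a cyclic vector for $T$ whose orbit $\{1, t, t^2, \ldots, t^{2g-1}\}$ spans the module, the minimal polynomial of $T$ has degree $2g$ and therefore equals the characteristic polynomial, which is $p(t)$. The only genuine work is the verification at $i = 2g$, where the defining relation $p(t) \equiv 0$ gets encoded into the change of basis $\phi$; the other cases are mechanical bookkeeping that justifies why the polynomials $f_{g+k}$ were chosen to have exactly this telescoping form.
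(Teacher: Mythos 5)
Your proposal is correct, and it takes a genuinely different route from the paper. The paper proves this lemma by brute-force cofactor expansion: it sets $B_0 = B - tI$, repeatedly blocks off rows and columns to reduce to a $g \times g$ block $B_g$, then iterates the same reduction on $B_g$ down to a $2 \times 2$ determinant, and assembles the pieces. Your argument instead exhibits an explicit $\R[t]$-module isomorphism $\phi \colon \Rg \to \R[t]/(p(t))$ under which $B$ becomes multiplication by $t$; I checked the intertwining $\phi(Be_i) = t\,\phi(e_i)$ in all four regimes ($1 \le i \le g$; $i = g+1$; $g+2 \le i \le 2g-1$; $i = 2g$) and it holds, with $t f_{2g} - \phi(B e_{2g})$ coming out to exactly $p(t)$ as you assert. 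The tradeoff: the paper's computation is elementary and self-contained but opaque, while yours is structural — it identifies $B$ with the companion-matrix model of $\R[t]/(p(t))$, which yields the characteristic polynomial immediately and also gives strictly more information for free, namely that $B$ (hence $A$) is a cyclic (non-derogatory) transformation with minimal polynomial equal to $p(t)$, a fact the paper's determinant computation does not surface. One small presentational point: since the $i = g+k$ case with $k = 1$ uses $f_{g+1} = t^g$ from the first clause of your definition while $k \ge 2$ uses the second clause, it would be cleaner to note that the formula $f_{g+k} = t^{g+k-1} - \sum_{j=2}^{k} a_j t^{g+k-j}$ in fact also holds at $k=1$ (the sum is empty), so there is really only one formula for $k \ge 1$ and no case split is needed there.
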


\begin{proof}
As with the proof of lemma \ref{L:PreservesForm}, we prove our result for $B$ and the result immediately follows for $A$.

Let $B_0 = B - tI$, and let $B_{k+1}$ be the matrix obtained from $B_k$ by blocking off the first row and first column.  Then the $\left(0, 2g-k\right)$ minor of $B_k$ is $1$ for $0 \leq k < g$.  Thus we see that
\begin{equation}\label{E:1}
\det \left(B - tI\right) = 1 + a_2 \left(-t\right) + \left(-a_3\right) \left(-t\right)^2 + ... + \left(-1\right)^g a_g \left(-t\right)^{g - 1}  + \left(-t\right)^g \det{B_g}
\end{equation}

Where $B_g$ has form:
$$
B_g = \left(\begin{array}{ccccccccc}
a_2 -t & a_3 & \hdots & \hdots & a_{g+1} \\
1 & -t  &  & & 0 \\ 
& \ddots & \ddots  & & \vdots \\
& & \ddots & \ddots & \vdots \\
0 & &  & 1 & -t
\end{array} \right)
$$

Let $D_g = B_g$ and for $l \geq g$ let $D_{l-1}$ be the matrix obtained from $D_l$ by blocking off the last row and last column.  Then for $g \geq l > 2$, the $\left(0, l\right)$ minor of $D_l$ is $1$.  Thus we have:


\begin{align}
\det{B_g} & = \left(-1\right)^{g + 1} a_{g+1} + ... +  \left(-t\right)^{i} \left(-1\right)^{g + 1 - i} a_{g + 1 - i} + ... + \left(-t\right)^{g-3} \left(-1\right)^4 a_4 + \left(-t\right)^{g-2} \det{D_2} \nonumber \\
& = \left(-1\right)^{g+1} a_{g+1} + ... + \left(-1\right)^{g+1} t^i + ... + \left(-1\right)^{g+1} t^{g-3} + \left(-t\right)^{g-2} \det{D_2} \label{E:2}
\end{align}

Notice that in the equation above that if $g$ is even, then every coefficient is negative.  If $g$ is odd, every coefficient is positive.  Now,

\begin{equation}\label{E:3}
\det{D_2} = \det{ \left(\begin{array}{ccccccccc}
a_2 - t & a_3 \\
1 & -t
\end{array}\right) } = t^2 - a_2 t - a_3
\end{equation}

Now by substituting (\ref{E:3}) into (\ref{E:2}) into (\ref{E:1}), we obtain our result.
\end{proof}

Putting lemmas \ref{L:PreservesForm} and \ref{L:CharPoly} together, we have the following theorem:
\\
\begin{theorem}\label{T:AlgUnitsAreSymplectic}
Every algebraic unit is an eigenvalue of some symplectic matrix.
\end{theorem}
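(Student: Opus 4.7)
The plan is to reduce the theorem directly to the constructions already set up in this section, by producing from any algebraic unit $\lambda$ a palindromic polynomial of the shape (\ref{E:Palindromic}) having $\lambda$ as a root, and then invoking Lemmas~\ref{L:PreservesForm} and~\ref{L:CharPoly}.

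First I would let $p(t) = t^g + a_g t^{g-1} + \ldots + a_2 t + \epsilon$, with $\epsilon \in \{\pm 1\}$, denote the minimal polynomial of $\lambda$ over $\Z$, and form
\[
q(t) = \epsilon \cdot t^g\, p(t)\, p(t^{-1}).
\]
Since $p(t)\, p(t^{-1})$ is manifestly invariant under $t \mapsto t^{-1}$, the factor $t^g$ clears the negative powers and yields an honest integer polynomial of degree $2g$ whose coefficients are symmetric about the $t^g$ term. A quick coefficient check shows that the constant and leading terms of $t^g p(t) p(t^{-1})$ both equal $c_0 c_g = \epsilon \cdot 1 = \epsilon$, so the outside factor of $\epsilon$ normalizes $q(t)$ to be monic with constant term $+1$, placing it exactly in the palindromic form (\ref{E:Palindromic}). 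Because $\epsilon \neq 0$ and $\lambda$ is a root of $p(t)$, it remains a root of $q(t)$.

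At that point the conclusion is immediate: Lemma~\ref{L:CharPoly} produces the matrix $B$ (or $A$) whose characteristic polynomial is $q(t)$, and Lemma~\ref{L:PreservesForm} guarantees that $B$ preserves the symplectic form $K$ (respectively $A$ preserves $J$), so the constructed matrix lies in $\Spz$. Since $\lambda$ is a root of its characteristic polynomial, $\lambda$ is an eigenvalue of this symplectic matrix, which is the assertion of the theorem.

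The only point that requires any real attention is the sign bookkeeping around $\epsilon$, making sure the prefactor converts the case $\epsilon = -1$ into a genuinely monic palindromic polynomial with constant term $+1$; once that is handled, everything else is a direct appeal to the two lemmas already established, and I do not anticipate any further obstacle.
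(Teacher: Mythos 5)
Your proposal is correct and takes essentially the same route as the paper: the paper also forms the self-reciprocal polynomial $t^g q(t)q(t^{-1})$ from the minimal polynomial and then cites Lemmas~\ref{L:PreservesForm} and~\ref{L:CharPoly}. Your explicit $\epsilon$-normalization merely makes careful the case of constant term $-1$, which the paper leaves implicit by writing the minimal polynomial with constant term $1$.
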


\begin{proof}
Let $\lambda$ be an algebraic unit with minimum polynomial $q\left(t\right) = 1 + b_2t + b_3 t^2 + ... + b_g t^{g - 1} + t^g$.  Then $t^g q\left(t\right)q\left(t^{-1}\right)$ is a self-reciprocal polynomial.  Applying  lemmas \ref{L:PreservesForm} and \ref{L:CharPoly} we obtain our result.
\end{proof}


\section{Changing Basis to be Perron-Frobenius}\label{S:ChangeBasisToPF}

We say a real matrix $M$ is \emph{Perron-Frobenius} if it has all nonnegative entries and $M^k$ has strictly positive entries for some $k \in \N$.  Such matrices have important applications in dynamical systems, graph theory, and in studying pseudo-Anosov surface automorphisms.  A key result about such matrices was proved in the early 20th century:
\\
\begin{PF}
Let $M$ be Perron-Frobenius.  Then $M$ has a unique eigenvalue of largest modulus $\lambda$.  Furthermore, $\lambda$ is real, positive, and has an associated real eigenvector with all positive entries.
\end{PF}

The eigenvalue $\lambda$ is called the \emph{spectral radius} or \emph{growth rate} of $M$.  The main purpose of this section is to find integral matrices which can be conjugated to be Perron-Frobenius.  We'd also like to do this in a way which preserves a fixed symplectic form (for example, the symplectic form $J$ from section \ref{S:CanonicalSympForm}).  In particular, we prove the following:
\\
\begin{theorem}\label{T:PFConjugation}
Let $M \in \SpL$ such that $M$ has a unique, real eigenvalue of largest modulus greater than 1.  Suppose also that this eigenvalue has algebraic multiplicity 1.

Then $\exists n \in \N$ and $B \in \GL\left(2g\right)$ such that $B^{-1} M^n B$ is a Perron-Frobenius matrix in $\SpL$.
\end{theorem}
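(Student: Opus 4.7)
The strategy is to find an integral basis for $\Rg$, symplectic for $L$, in which a sufficiently large power of $M$ is represented by a matrix with strictly positive entries.

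First, extract spectral asymptotics. Because $\lambda$ is a real, simple eigenvalue of largest modulus, there are real right and left eigenvectors $v, w$ of $M$ for $\lambda$, normalized so that $w^T v = 1$. Since all other eigenvalues have modulus strictly less than $|\lambda|$, the decomposition $M = \lambda v w^T + M'$ satisfies $M' v = 0$, $w^T M' = 0$, and $\|(M')^n\| = O(\mu^n)$ for some $\mu < |\lambda|$. Hence for any vector $x$,
$$M^n x \;=\; \lambda^n (w^T x)\, v \;+\; O(\mu^n).$$
After replacing $M$ by $M^2$ if necessary, assume $\lambda > 0$.

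Next, reformulate Perron-Frobenius as a condition on the basis. Let $B$ have columns $b_1, \ldots, b_{2g}$ and write $v = \sum_k \gamma_k b_k$ in this basis. Then the $(i,j)$ entry of $B\inv M^n B$ equals $\lambda^n (w^T b_j)\gamma_i + O(\mu^n)$. With signs chosen so that $w^T v = 1 > 0$, it suffices to find an integral basis, symplectic for $L$, satisfying (A) $w^T b_j > 0$ for every $j$ and (B) $\gamma_i > 0$ for every $i$. For such a basis every entry of $B\inv M^n B$ is strictly positive once $n$ is large, so the conjugate is Perron-Frobenius; integrality and preservation of $L$ then follow from $B \in \SpL$.

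The remaining work is constructing such a basis. Conditions (A) and (B) carve out a non-empty open region in the real symplectic group of $L$: geometrically, a basis qualifies when all $b_i$ lie in the open half-space $\{x : w^T x > 0\}$ and their positive cone contains $v$ in its interior. To obtain an integral representative I plan to begin with an arbitrary element of $\SpL$ and apply a sequence of integer symplectic transvections --- the standard generators of $\SpL$ --- which add an integer multiple of one basis vector to another while preserving both integrality and symplecticity. By composing such moves, both the signs of $w^T b_i$ and the coordinates $\gamma_i$ of $v$ can be adjusted to be simultaneously positive.

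The main obstacle is precisely this final step: because $\SpL$ is a discrete subgroup of the real symplectic group, the target open region cannot be reached by free approximation of a real basis. Overcoming this likely requires exploiting that $\SpL$-orbits in $\Rg$, which are dense for generic vectors such as $v$ and $w$, translate into enough flexibility at the level of symplectic bases to place every basis vector inside a prescribed cone. Verifying that the transvection generators collectively supply this flexibility, while keeping integrality and symplecticity intact, will be the technical heart of the argument.
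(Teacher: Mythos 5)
Your first half is sound and matches the paper's Lemma~\ref{L:PFBasis} in substance: the spectral decomposition $M = \lambda v w^T + M'$ with $\|(M')^n\| = O(\mu^n)$ is equivalent to the paper's Jordan-block argument, and your conditions (A) $w^T b_j > 0$ and (B) $\gamma_i > 0$ are precisely the paper's requirements that all $b_j$ lie in $W^+$ and that $v_M$ lies in the interior of the cone they span (since $W = \ker w^T$). So the reduction to ``find the right basis'' is correct.

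The gap is in the second half, and you have flagged it yourself. Two issues. First, you have set yourself a harder problem than the theorem actually requires: the statement only demands $B \in \GL(2g)$ with $B^{-1}M^nB \in \SpL$, not $B \in \SpL$. Insisting on an integral symplectic basis is overconstrained, and that overconstraint is what drives you into the transvection/density argument you cannot complete. Second, the proposed density argument has a real obstruction: $\SpL$ is a discrete group, so small deformations in the real symplectic group are unavailable, and it is not clear that integer transvections alone can simultaneously put all $2g$ basis vectors into an arbitrarily prescribed open cone around $v_M$. You correctly identify this as the technical heart but do not resolve it.

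The paper takes a different route here that sidesteps both issues. It first builds \emph{some} Perron-Frobenius matrix $A \in \SpL$ (Lemma~\ref{L:SPFExists}), raises it to a high power so that the normalized columns of $A^n$ cluster into a cone of diameter less than the distance from $\hat v_M$ to $W \cap \Sg$ (Corollary~\ref{C:SuckedIn}), then applies a unitary transformation $S$ --- which is both orthogonal and symplectic by Lemmas~\ref{L:UnitaryTransitive} and~\ref{L:UnitaryIntersection} --- to rotate this narrow cone so it contains $v_M$ while staying on the correct side of $W$. Unitarity is used precisely because orthogonality preserves the cone's diameter during rotation. Then $S$ is perturbed into $\SpLq$ via the density of rational symplectics (Lemma~\ref{L:SymplecticDense}), and $B' = SA^n$ is scaled by an integer $\alpha$ to $B = \alpha B'$ with integer entries. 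The scaled $B$ is not symplectic, but since conjugation by $B$ equals conjugation by $B'$, the conjugate is still in $\mathrm{Sp}(2g,\Q,L)$. Integrality is then recovered by a congruence trick: with $d = \det B$ and $B^{-1} = \tfrac{1}{d}C$ (adjugate), finiteness of $\SL(2g,\Z/d\Z)$ yields $m$ with $M^m = I + d\Lambda$, so $B^{-1}M^m B = I + C\Lambda B$ is integral. You would need to supply something playing the role of the unitary rotation, the rational-perturbation lemma, and the congruence argument to close your gap.
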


Here we denote by $\SpL$ the group of $2g \cross 2g$ integer matrices which preserve a fixed symplectic form $L$.  When we do not care to fix a particular symplectic form, we will use the notation $\Sp$ to mean the group of symplectic linear transformations on $\Rg$.

We also obtain a similar result for integral, nonsingular matrices (see corollary~\ref{C:NoSymplectic}).


Given a matrix $M$ with a unique real eigenvalue of largest modulus greater than $1$, we will denote this eigenvalue $\lambda_M$ and its associated eigenvector $v_M$.  We will refer to $\lambda_M$ and $v_M$ as the dominating eigenvalue and dominating eigenvector, respectively.



The idea behind the proof will be to find an integral basis $\{ b_1, ..., b_{2g} \}$ for $\Rg$ such that $v_M$ is contained in the cone determined by $b_1, ... ,b_{2g}$.  We also need that if $W$ is the co-dimension $1$ invariant subspace of $M$ such that $v_M \notin W$, then $b_1, ... , b_{2g}$ all lie on the same side of $W$ as $v_M$.  To make the notion of side precise, denote by $W^+$ as the set of all vectors in $\Rg$ that can be written as $a v_M + w$ where $a \in \mathbb{R}^+$ and $w \in W$.
\\
\begin{lemma}\label{L:PFBasis}
Let $M$ be a matrix with a dominating real eigenvalue $\lambda_M$ and associated real eigenvector $v_M$.  Say $\{ b_1, ..., b_{2g} \}$ is a basis for $\Rg$ such that $b_1, ..., b_{2g} \in W^+$ and $v_M$ is contained in the interior of the cone determined by $b_1, ..., b_{2g}$.  

Then for some $n \in \mathbb{N}$, $M^n$ has all positive entries after changing to the basis above.
\end{lemma}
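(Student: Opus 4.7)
The plan is to compute the entries of $M^n$ in the basis $\{b_1,\ldots,b_{2g}\}$ directly, since the $(j,i)$-entry of $M^n$ in that basis is precisely the $b_j$-coordinate of $M^n b_i$. So the goal reduces to showing that for some $n$, every such coordinate is positive, and since there are only finitely many pairs $(i,j)$ it suffices to control each one asymptotically.

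First I would use the hypotheses on $\lambda_M$ (real, dominating, algebraic multiplicity $1$) to decompose $\Rg = \mathrm{span}(v_M) \oplus W$, where $W$ is the $M$-invariant complementary hyperplane. Since each $b_i \in W^+$, I can write
\[
b_i = c_i v_M + w_i, \qquad c_i > 0, \quad w_i \in W.
\]
Applying $M^n$ then yields $M^n b_i = c_i \lambda_M^n v_M + M^n w_i$. Because every eigenvalue of $M|_W$ has modulus strictly less than $|\lambda_M|$, the standard spectral radius bound (e.g.\ via Jordan form) gives $\|M^n w_i\|/|\lambda_M|^n \to 0$, so
\[
\frac{1}{\lambda_M^n}\, M^n b_i \;\longrightarrow\; c_i v_M.
\]

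Next I would use the hypothesis that $v_M$ lies in the interior of the cone spanned by $b_1,\ldots,b_{2g}$ to write $v_M = \sum_j \alpha_j b_j$ with all $\alpha_j > 0$. Combining with the limit above:
\[
\frac{1}{\lambda_M^n}\, M^n b_i \;\longrightarrow\; \sum_j c_i \alpha_j\, b_j,
\]
so the $(j,i)$-entry of $M^n$ in the $b$-basis, divided by $\lambda_M^n$, tends to the strictly positive number $c_i \alpha_j$. Since the collection of pairs $(i,j)$ is finite, for all sufficiently large $n$ every entry has the same sign as $\lambda_M^n$.

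The only real obstacle is bookkeeping the sign of $\lambda_M^n$: if $\lambda_M > 0$ we are immediately done, and if $\lambda_M < 0$ we simply restrict to even $n$ so that $\lambda_M^n > 0$. Beyond this parity consideration, the argument is a straightforward application of the dominant-eigenvalue asymptotics, and no new tools beyond the spectral decomposition and the convex-cone hypothesis on $\{b_1,\ldots,b_{2g}\}$ are needed.
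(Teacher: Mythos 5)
Your argument is correct and follows essentially the same path as the paper's own proof: decompose $b_i = c_i v_M + w_i$ with $c_i > 0$ using the $W^+$ hypothesis, use Jordan-form/spectral asymptotics to show $\lambda_M^{-n} M^n b_i \to c_i v_M$, re-expand $v_M = \sum_j \alpha_j b_j$ with $\alpha_j > 0$ from the cone hypothesis, and conclude all entries of the conjugated $M^n$ are eventually positive (the paper handles the sign of $\lambda_M$ by replacing $M$ with $M^2$ at the outset, which is the same as your even-$n$ restriction).
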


\begin{proof}
Since we can replace $M$ by $M^2$ if necessary, we may assume $\lambda_M$ is positive.  Let $\lambda_2, ..., \lambda_n$ be the other eigenvalues of $M$ and let $v_M, v_2, ..., v_{2g}$ be a Jordan basis for $M$ (i.e, a basis in which the linear transformation represented by $M$ is in Jordan canonical form).  Note that $v_2, ..., v_{2g}$ span $W$.

Consider a Jordan block associated to some eigenvalue $\lambda_i$ of $M$:

$$
J_i = \left(\begin{array}{ccccccccc}
\lambda_i & 1 & & \\
& \lambda_i & \ddots & & \\
& & \ddots & 1 & \\
& & & \lambda_i & \\
\end{array} \right)
$$

The definition of matrix multiplication guarantees that each entry of $J_i^k$ will be a polynomial in $\lambda_i$.  Each diagonal entry will equal $\lambda_i^k$ and every other entry of $J_i^k$ will have degree strictly less than $k$.  Thus we see that if $v_j$ is a Jordan basis vector corresponding to the eigenvalue $\lambda_i$ we get $\displaystyle \frac{J_i^k v_j}{\lambda_M^k} \goesto ~ 0$ as $k \goesto ~ \infty$, which implies:

\begin{equation}\label{E:JordanGrowth}
\displaystyle \frac{M^k v_j}{\lambda_M^k} \goesto ~ 0 ~ \mathrm{as} ~ k \goesto ~ \infty
\end{equation}




Since $v_M$ is in the interior of the cone determined by $b_1, ..., b_{2g}$, for positive real scalars $a_1, ..., a_{2g}$ we have $v_M = a_1 b_1 + ... + a_{2g} b_{2g}$.  Furthermore, since $b_i \in W^+$, for some positive real scalar $c_i$ and $w \in W$ we have $b_i = c_i v_M + w$.  Since $w$ may be expressed as a linear combination of $v_2, ..., v_{2g}$, we see that $\displaystyle \frac{M^k b_i}{\lambda_M^k} \goesto ~ c_i v_M$ as $k \goesto ~ \infty$ by (\ref{E:JordanGrowth}).  Rewriting $v_M$ and $w$ as (real) linear combinations of $b_1, ..., b_{2g}$, we see that for $k$ large enough $M^k b_i$ is a positive linear combination of $b_1, ..., b_{2g}$.  Hence, $M^k$ has all positive entries in the basis $b_1, ..., b_{2g}$.
\end{proof}

The last paragraph of the proof above also gives us a quick but important corollary.  We will use $|| \cdot ||$ to denote the standard Euclidean norm.
\\
\begin{corollary}\label{C:SuckedIn}
Let $M$ as in lemma \ref{L:PFBasis} and $v \in W^+$.  Then the distance between $\displaystyle \frac{M^k v}{|| M^k v ||}$ and $\displaystyle \frac{v_M}{|| v_M ||}$ approaches $0$ as $k \goesto ~ \infty$.
\end{corollary}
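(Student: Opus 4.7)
The plan is to reuse essentially the computation performed in the last paragraph of the proof of Lemma~\ref{L:PFBasis}, but track the ratio rather than just the positivity of coefficients. First I would invoke the definition of $W^+$ to write $v = c\, v_M + w$ for some $c > 0$ and $w \in W$. Since $v_2, \ldots, v_{2g}$ span $W$, we can expand $w$ as a linear combination of these Jordan basis vectors, all of which correspond to eigenvalues of modulus strictly less than $|\lambda_M|$.

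Next I would apply the estimate (\ref{E:JordanGrowth}) term by term to conclude that $M^k w / \lambda_M^k \to 0$ as $k \to \infty$. Since $M v_M = \lambda_M v_M$, this gives
\begin{equation*}
\frac{M^k v}{\lambda_M^k} \;=\; c\, v_M \;+\; \frac{M^k w}{\lambda_M^k} \;\xrightarrow{k \to \infty}\; c\, v_M.
\end{equation*}
Because $c > 0$ and $v_M \neq 0$, the limit vector $c\, v_M$ is nonzero, so I may pass to the normalized expression: the map $u \mapsto u/\|u\|$ is continuous on $\Rg \setminus \{0\}$, hence
\begin{equation*}
\frac{M^k v}{\|M^k v\|} \;=\; \frac{M^k v / \lambda_M^k}{\|M^k v / \lambda_M^k\|} \;\xrightarrow{k \to \infty}\; \frac{c\, v_M}{\|c\, v_M\|} \;=\; \frac{v_M}{\|v_M\|},
\end{equation*}
where in the last equality I use that $c > 0$ so the scalar cancels without a sign change. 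This is precisely the claim.

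There is essentially no obstacle here beyond checking that $c$ is strictly positive, which is baked into the definition of $W^+$, and ensuring that $M$ may be replaced by $M^2$ if needed to guarantee $\lambda_M > 0$ (exactly as in Lemma~\ref{L:PFBasis}); otherwise $\lambda_M^k$ would alternate in sign and one would need absolute values throughout. With that convention in place, the argument is a direct consequence of the Jordan-form growth estimate and continuity of normalization.
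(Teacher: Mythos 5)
Your proof is correct and takes essentially the same approach as the paper, whose ``proof'' of this corollary is just a pointer to the last paragraph of the proof of Lemma~\ref{L:PFBasis}; your argument is precisely that paragraph made explicit (decompose $v = c\,v_M + w$ with $c>0$ and $w\in W$, apply the Jordan growth estimate, then pass to the normalization by continuity). Your remark about the sign of $\lambda_M$ is a legitimate observation: as literally stated the corollary would fail when $\lambda_M<0$, since the normalized iterates would then have the two antipodal accumulation points $\pm\hat{v}_M$, but in the paper the corollary is only ever invoked for Perron-Frobenius matrices, whose dominating eigenvalue is automatically positive.
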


Our goal is now to construct a matrix $B \in \SpL$ such that the columns of $B$ form a basis satisfying the hypotheses of lemma \ref{L:PFBasis}.  The idea will be to construct a set of symplectic basis vectors which define a very narrow cone, and then apply a slightly perturbed symplectic isometry of $\Sg$ to move that cone into the correct position.

A symplectic linear transformation $\tau$ is a (symplectic) transvection if $\tau \neq 1$, $\tau$ is the identity map on a codimension 1 subspace $U$, and $\tau v - v \in U$ for all $v \in \Rg$.  Geometrically, a tranvection is a shear fixing the hyperplane $U$.  A symplectic transvection preserving the symplectic form $J$ can be written 

$$\tau_{u, a} v = v + a J\left(v, u\right) u$$

for some scalar $a$ and vector $u \in \Rg$.  Note that the fixed subspace is $<u>^\perp$ and that it contains $u$.  $\Sp$ is generated by transvections (see \cite{Grove}).  If we wish to preserve a symplectic form $L$ different from $J$, simply replace $J$ with $L$ in the formula.




Let $u \in \Rg$ be the vector $\left(-1, 1, ... , -1, 1\right)$ and set $a = 1$.  Let $e_1, ..., e_{2g}$ be the standard basis for $\Rg$.  Notice $J\left(e_i, u\right) = 1$, so $\tau_{u, 1} e_i = e_i + u$.  Thus, in matrix form:

$$
\tau_{u, 1} = \left(
\begin{array}{ccccc}
0 & -1 & & -1 & -1 \\
1 & 2 & & 1 & 1 \\
\vdots & \vdots & \ddots  & \vdots & \vdots \\
-1 & -1 & & 0 & -1 \\
1 & 1 & & 1 & 2 \\
\end{array}
\right)
$$

Composing this with transvections $\tau_{e_k, 2}$ with $k$ even, we get the symplectic matrix

$$
A = \left(
\begin{array}{ccccc}
2 & 3 & & 1 & 1 \\
1 & 2 & & 1 & 1 \\
\vdots & \vdots & \ddots  & \vdots & \vdots \\
1 & 1 & & 2 & 3 \\
1 & 1 & & 1 & 2 \\
\end{array}
\right)
$$

This matrix preserves the symplectic form $J$, and is also Perron-Frobenius.  In fact, we can find such a matrix for any integral symplectic form:
\\
\begin{lemma}\label{L:SPFExists}
There is a Perron-Frobenius matrix in $\SpL$ for any integral symplectic form $L$.
\end{lemma}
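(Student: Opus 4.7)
The plan is to produce a Perron-Frobenius element of $\SpL$ by mimicking the transvection construction used just above for the form $J$. Since symplectic transvections with respect to $L$ have the form $\tau_{u,a}(v) = v + a\,L(v,u)\,u$ and generate $Sp(L,\R)$, integer choices of $u$ and $a$ yield integer elements of $\SpL$; the goal is to exhibit a product of such transvections that is strictly positive.

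First I would identify a suitable analogue of the alternating vector $u = (-1,1,\ldots,-1,1)$ used for $J$, namely a vector for which $L(e_i,u)$ is a positive integer constant in $i$. Setting $u := \mathrm{adj}(L)\cdot \mathbf{1}$ gives an integer vector with $L u = \det(L)\cdot \mathbf{1}$, and since $\det(L) = \mathrm{Pf}(L)^2 \geq 1$, the transvection $\tau_{u,1}$ is an integer matrix in $\SpL$ whose $(i,j)$-entry is $\delta_{ij} + \det(L)\cdot u_i$. Thus the row pattern of $\tau_{u,1}$ is entirely determined by the signs of the components of $u$: rows corresponding to $u_i > 0$ are already all positive, while rows with $u_i < 0$ are mostly negative.

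Next I would correct the offending rows by composing $\tau_{u,1}$ with further integer transvections of the form $\tau_{e_k,c_k}$, in exact analogy with the compensating transvections $\tau_{e_k,2}$ (for $k$ even) used in the $J$ case. Left-composition with $\tau_{e_k,c_k}$ modifies row $k$ of the current matrix $M$ by $c_k L(Me_j,e_k)$ in column $j$, and because $L$ is nondegenerate the linear functionals $v \mapsto L(v,e_k)$ span the dual of $\Rg$, so every row can be adjusted. By choosing integers $c_k$ of large enough magnitude for the indices $k$ where $u_k < 0$, the negative entries of $\tau_{u,1}$ are overwhelmed by positive contributions while the already-positive entries remain positive. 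The resulting integer matrix in $\SpL$ has all entries strictly positive, and is therefore Perron-Frobenius.

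The principal obstacle is the combinatorial bookkeeping: for a general $L$ the sign pattern of $u = \mathrm{adj}(L)\mathbf{1}$ is irregular, so the list of compensating transvections cannot be written down once and for all but must be adapted to the particular $L$, and one must verify that a single choice of the $c_k$ simultaneously fixes every bad row without damaging the good ones. Apart from that case analysis, the argument proceeds exactly as for $J$.
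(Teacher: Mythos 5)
Your first step is in the same spirit as the paper's: both proofs begin by choosing an integer (or rational) vector $u$ for which $L(e_i,u)$ is a positive constant independent of $i$, so that $\tau_{u,a}$ shears every basis vector in the same direction $u$. Your choice $u=\mathrm{adj}(L)\mathbf{1}$ is a clean way to get this.

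The gap lies entirely in the correction step, and it is a real gap, not bookkeeping. You take $a=1$ and then try to repair the bad rows with transvections $\tau_{e_k,c_k}$, one per offending row. For a single integer $c_k$ to turn every entry of row $k$ positive, the quantities $L(Me_j,e_k)$ must have the \emph{same sign} for every column index $j$; otherwise ``large magnitude'' is useless because some products $c_kL(Me_j,e_k)$ will be large positive and others large negative. With $a=1$ you have $L(\tau_{u,1}e_j,e_k)=L_{jk}-\det(L)^2$, and there is no reason the entries $L_{jk}$ of an arbitrary integral symplectic form should all be dominated by $\det(L)^2$, so the sign can genuinely vary with $j$. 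Moreover, even if one row can be fixed, each $\tau_{e_k,c_k}$ alters row $k$ of the matrix, which in turn alters the pairings $L(M'e_j,e_{k'})$ controlling the next correction; these interactions are not controlled by your argument, and they do not vanish even if $a$ is taken large, because the needed $c_k$ then also grows with $a$.

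The paper avoids both problems by two devices you are missing. First, it takes $a$ \emph{large} in $\tau_{u,a}$, so that every column $\tau_{u,a}e_i=e_i+au$ is nearly parallel to $u$; this forces the pairings $L(\tau_{u,a}e_i,\,\cdot\,)$ to have a sign that is uniform in $i$, which is exactly what you cannot guarantee at $a=1$. Second, and more importantly, the paper uses a \emph{single} second transvection $\tau_{w,-b}$ with $w=(1,\dots,1)$, rather than a family of $\tau_{e_k,c_k}$'s. Since $L(u,w)=-\sum_i L(e_i,u)<0$, each column picks up a large positive multiple of the all-ones vector $w$, which simultaneously pushes every entry of every row positive in one stroke; there is no row-by-row iteration and hence nothing to interfere. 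Integrality is then handled by clearing denominators. If you replace your cascade of $\tau_{e_k,c_k}$'s with this single $\tau_{w,-b}$ and take $a$ large, your proof closes up and becomes essentially the paper's.
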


\begin{proof}
Non-degeneracy of $L$ guarantees that there is $u \in \Q^{2g}$ such that $L\left(e_i, u\right) = 1$ for every basis vector $e_i$.  Let $w = \left(1, 1, ..., 1\right) \in \Q^{2g}$, and notice that $L\left(u, w\right) = -2g$.  Then $\tau_{u, a}e_i = e_i + a u$ for for $a$ very large we have that $\tau_{u, a} e_i$ is close to $c u$ for some $c \in \N$.  Now by continuity, $L\left(\tau_{u, a}e_i, w \right) = l < 0$ and for $b \in \N$ we have $\tau_{w, -b}\tau_{u, a} e_i = \tau_{u, a} e_i - b l w$.  Thus for $b$ large enough, $\tau_{w, -b} \tau_{u, a} e_i$ is a rational vector with positive entries for all $i$.  This transformation has Perron-Frobenius matrix representation.  If it is not integral, we can adjust the values of $a$ and $b$ to clear denominators.
\end{proof}

Let $\Ug$ denote the group of unitary linear transformations of $\C^g$.  Equivalently, we can think of the unitary group as a group of matrices: $\Ug = \{ M | M \in \mathrm{GL}\left(g, \C \right), M^*M = I \}$ where $M^*$ denotes the conjugate transpose of $M$.

We identify $\Ug$ with a subgroup of $\mathrm{GL}\left(2g, \R\right)$ as follows:  Let $M \in \Ug$.  Replace every entry $m = r e^{\left(i \theta \right)} \in \C$ in $M$ by the scaled $2 \cross 2$ rotation matrix 
$R = \left(
\begin{array}{ccccc}
r\cos\left(\theta\right) & -r\sin\left(\theta\right) \\
r\sin\left(\theta\right) & r\cos\left(\theta\right)
\end{array}
\right)$  
We now can consider $\Ug$ as a group of real matrices acting on $\Rg$.  Notice that if $m \mapsto R$, then $\bar{m} \mapsto R^T$.  Thus, if $M =\left[m_{i,j}\right] \in \Ug$ is identified with $N = \left[R_{i,j}\right]$, we have $M^*M = \left[\bar{m}_{i,j}\right]^T \left[m_{i,j}\right] \mapsto \left[R_{i,j}^T\right]^T \left[R_{i,j}\right] = N^T N = I$.  Hence with this identification $\Ug$ is a subgroup of the real orthogonal group $\mathrm{O}\left(2g\right)$ (in fact it is a subgroup of $\mathrm{SO\left(2g\right)}$).

Notice that the symplectic form $J$ gets identified with the complex matrix 

$$
\left(\begin{array}{ccccc}
-i & & \\
& \ddots & \\
& & -i \\
\end{array} \right)
$$

which is in the center of $\Ug$.  Then if $M \in \Ug$ we have $M^* J M = J$, and thus $\Ug$ is a subgroup of $\Sp$.  Below is a more powerful result which is proved in \cite{MS} as lemma 2.17.
\\
\begin{lemma}\label{L:UnitaryIntersection}
$\Sp \cap \Og = \Ug$
\end{lemma}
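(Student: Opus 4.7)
The inclusion $\Ug \subseteq \Sp \cap \Og$ has effectively been verified in the discussion preceding the statement, so the real content is the reverse inclusion. My plan is to show that any element of $\Sp \cap \Og$ must commute with $J$ (viewed as a complex structure), which forces it to be $\C$-linear, and then to read off unitarity from the orthogonality relation.

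First I would take $M \in \Sp \cap \Og$, so that $M^T M = I$ and $M^T J M = J$. Substituting $M^T = M^{-1}$ from the orthogonality condition into the symplectic relation gives $M^{-1} J M = J$, i.e., $M$ and $J$ commute. Under the identification of $\Rg$ with $\C^g$ set up in the paragraphs above, the matrix $J$ corresponds to scalar multiplication by $-i$ on $\C^g$, so commuting with $J$ is exactly the condition that $M$ is $\C$-linear. Therefore $M$ descends to a $g \times g$ complex matrix $\tilde{M} \in \GL\left(g, \C\right)$ under the identification.

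Next I would invoke the bookkeeping observation already made in the text: the correspondence $m \mapsto R$ sends the complex conjugate $\bar{m}$ to $R^T$, so under the identification the real transpose of the $2g \times 2g$ block form of $M$ corresponds to the conjugate transpose $\tilde{M}^*$. The orthogonality relation $M^T M = I$ then translates directly into $\tilde{M}^* \tilde{M} = I$, which is precisely the defining condition for $\tilde{M} \in \Ug$. This gives $\Sp \cap \Og \subseteq \Ug$ and completes the proof.

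The only real obstacle is keeping the conventions straight; once one confirms that $J$ genuinely plays the role of multiplication by $-i$ under the chosen identification and that real transposition corresponds to complex conjugate transposition (both of which are essentially stated in the setup preceding the lemma), the algebraic manipulation is a two-line consequence.
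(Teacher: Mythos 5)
Your argument is correct, and it is the standard ``two out of three'' proof: from $M^TM = I$ and $M^TJM = J$ you deduce $MJ = JM$, which under the identification of $\Rg$ with $\C^g$ (where $J$ acts as multiplication by $-i$) says exactly that $M$ is $\C$-linear; then the dictionary transpose $\leftrightarrow$ conjugate transpose converts orthogonality into unitarity. Note, however, that the paper does not actually prove this lemma at all --- it simply cites it as Lemma~2.17 of McDuff--Salamon. So there is no ``paper's proof'' to compare against; you have supplied a self-contained argument where the author chose to defer to a reference, and the argument you give is the one found in that reference. One small point worth making explicit if you were to write this up: commuting with $-i$ is equivalent to commuting with $i$, so $MJ = JM$ really does force $\C$-linearity, which you implicitly use but could state for the reader.
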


We also need the following fact:
\\
\begin{lemma}\label{L:UnitaryTransitive}
The unitary group $\Ug$ acts transitively on $\Sg \subseteq \Rg$.
\end{lemma}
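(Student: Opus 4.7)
The plan is to use the identification of $\mathbb{R}^{2g}$ with $\mathbb{C}^g$ set up in the preceding paragraphs, under which a real vector $\left(a_1, b_1, \ldots, a_g, b_g\right)$ corresponds to the complex vector $\left(a_1 + i b_1, \ldots, a_g + i b_g\right)$. A direct check (matching the $2 \times 2$ block identification given in the text) shows that the squared Euclidean norm $\sum \left(a_j^2 + b_j^2\right)$ on $\Rg$ equals the squared Hermitian norm $\sum |a_j + i b_j|^2$ on $\mathbb{C}^g$, so $\Sg \subseteq \Rg$ is identified with the set of unit vectors in $\mathbb{C}^g$. It therefore suffices to show that $\Ug$, viewed as a group of complex $g \times g$ matrices, is transitive on unit vectors in $\mathbb{C}^g$.

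For this I would prove the stronger statement that every unit vector lies in the $\Ug$-orbit of the standard basis vector $e_1 \in \mathbb{C}^g$. Given any unit $v \in \mathbb{C}^g$, apply Gram-Schmidt with respect to the standard Hermitian inner product, starting from $v$, to extend it to an orthonormal basis $v = v_1, v_2, \ldots, v_g$. Form the matrix $U$ whose $j$th column is $v_j$; orthonormality of the columns is exactly the condition $U^* U = I$, so $U \in \Ug$, and by construction $U e_1 = v$. Given two unit vectors $v, w$, construct $U_v, U_w \in \Ug$ with $U_v e_1 = v$ and $U_w e_1 = w$ in this way; then $U_w U_v\inv \in \Ug$ sends $v$ to $w$, giving transitivity.

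There is no real obstacle here: Gram-Schmidt succeeds as soon as the starting vector is nonzero, and the norm correspondence is immediate from the block identification. The only subtle point to remember is that a complex matrix unitary for the Hermitian form on $\mathbb{C}^g$ automatically corresponds, under the $2 \times 2$ block identification described in the text, to a real matrix preserving the Euclidean inner product on $\Rg$; this is built into that identification, since $\mathrm{Re}\left(\bar{z}w\right)$ recovers the Euclidean pairing of the underlying real vectors while $\mathrm{Im}\left(\bar{z}w\right)$ recovers the symplectic form $J$. Thus the $U$ constructed above lies in $\Og \cap \Sp = \Ug$ by lemma \ref{L:UnitaryIntersection}, and the transitive action on $\Sg$ follows.
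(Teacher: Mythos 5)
Your proof is correct and takes essentially the same route as the paper: both identify $\Sg$ with the unit vectors of $\C^g$ and apply Gram-Schmidt to extend a given unit vector to an orthonormal basis, observing that the resulting change-of-basis matrix lies in $\Ug$ and carries $e_1$ to $v$. Your additional remarks on the norm correspondence and on $\Og \cap \Sp = \Ug$ are fine but not needed beyond what the paper already records.
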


\begin{proof}
The $\Sg$ sphere can be thought of as all vectors in $\C^g$ having unit length.  Let $v \in \Sg$ and $\{e_1, ... , e_g\}$ be the standard basis for $\C^g$.  Using the Gram-Schmidt process, we can extend $v$ to an orthonormal basis $\{v, v_2, ..., v_g\}$ for $\C^g$.  Then the change of basis matrix is in $\Ug$ and sends $e_1$ to $v$.
\end{proof}


At one point during the proof of our main theorem, it will become important to know that $\Spq$ is dense in $\Sp$.  This follows quickly from the Borel Density Theorem, but we include an elementary proof.
\\
\begin{lemma}\label{L:SymplecticDense}
$\Spq$ is dense in $\Sp$.
\end{lemma}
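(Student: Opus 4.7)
The strategy is to use the fact, already cited from Grove, that $\Sp$ is generated by symplectic transvections of the form $\tau_{u, a}(v) = v + a\,J(v, u)\,u$ with $u \in \Rg$ and $a \in \R$. Two elementary observations make density almost automatic. First, the assignment $(u, a) \mapsto \tau_{u, a}$ from $\Rg \times \R$ into $\Sp$ is continuous, since the matrix entries of $\tau_{u,a}$ in the standard basis are polynomials in the coordinates of $u$ and in $a$. Second, if $u \in \Q^{2g}$ and $a \in \Q$, then $\tau_{u, a}$ has rational entries and hence lies in $\Spq$. Since $\Q^{2g} \times \Q$ is dense in $\Rg \times \R$, rational symplectic transvections are dense in the set of all symplectic transvections.

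Given $M \in \Sp$, I would write $M = \tau_{u_1, a_1}\cdots\tau_{u_k, a_k}$ as a finite product of transvections, and then approximate each $(u_i, a_i)$ by a rational pair $(u_i', a_i')$. The product $M' = \tau_{u_1', a_1'}\cdots\tau_{u_k', a_k'}$ lies in $\Spq$, and the continuity of matrix multiplication makes $M'$ an arbitrarily good approximation of $M$ provided each $(u_i', a_i')$ is chosen sufficiently close to $(u_i, a_i)$. There is no genuine obstacle here; the only step requiring mild care is a telescoping estimate of the form
\[
||M - M'|| \leq \sum_{i=1}^{k} ||\tau_{u_1', a_1'} \cdots \tau_{u_{i-1}', a_{i-1}'}|| \cdot ||\tau_{u_i, a_i} - \tau_{u_i', a_i'}|| \cdot ||\tau_{u_{i+1}, a_{i+1}} \cdots \tau_{u_k, a_k}||
\]
in some submultiplicative matrix norm. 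Since $k$ is fixed once $M$ is fixed, and each factor on the right is uniformly bounded in a neighborhood of $M$, driving each $||\tau_{u_i, a_i} - \tau_{u_i', a_i'}||$ below $\eps/(kC)$ for an appropriate constant $C$ yields $||M - M'|| < \eps$, completing the density argument.
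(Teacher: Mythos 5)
Your proof is correct, but it takes a genuinely different route from the paper's. The paper proves density directly: given $M' \in \mathrm{Sp}(2g, \R, J)$, it first perturbs the entries to obtain a nearby rational matrix $M$ (which need not be symplectic), and then runs a symplectic analogue of the Gram--Schmidt process on the columns of $M$ --- rescaling each $a_i$ so $\langle a_i, b_i\rangle = 1$ and subtracting off small rational multiples of earlier columns to zero out the stray pairings --- producing a rational symplectic matrix still close to $M'$. Your argument instead leans on the structural fact (already quoted from Grove in Section~\ref{S:ChangeBasisToPF}) that $\Sp$ is generated by transvections $\tau_{u,a}$, observes that the entries of $\tau_{u,a}$ are polynomial in $(u,a)$ so that rational $(u,a)$ yield elements of $\Spq$, and then approximates each factor in a fixed transvection decomposition of $M$ by a nearby rational transvection, closing with a standard telescoping estimate. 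Both arguments are sound. Yours is more conceptual and shorter on computation, at the cost of invoking the generation-by-transvections theorem as a black box; the paper's is more elementary and self-contained, requiring only that one can correct a near-symplectic rational matrix column by column. One small remark on your write-up: since $\tau_{cu,a} = \tau_{u,c^2 a}$, the parameterization $(u,a) \mapsto \tau_{u,a}$ is redundant, but this causes no harm for density.
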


\begin{proof}
Let $M' \in \mathrm{Sp}\left(2g, \R, J\right)$.  Perturb the entries of $M'$ by a small amount to obtain a matrix $M$ with rational entries.  We will systematically modify the columns $a_1, b_1, ..., a_g, b_g$ of $M$ to form a new $M$ which preserves $J$ and still differs from $M'$ by a small amount.  Here for convience we let $<~,~>$ denote the symplectic form given by $J$.

We iterate the following procedure for each pair of columns $a_i$, $b_i$, starting with $a_1$, $b_1$.  First, say $<a_i, b_i> = 1 + \eta_i$ where $\eta_i$ is a small, rational number (its magnitude depends on the size of the perturbation of $M'$).  Replace $a_i$ with $\displaystyle \frac{a_i}{1 + \eta_i}$, so that now $<a_i, b_i> = 1$.  Now we modify each pair of columns $a_j, b_j$ with $j > i$.  Set $\eps_{i,j} = < a_i, a_j >$ and $\delta_{i,j} = < b_i, a_j >$.  Replace $a_j$ with $a_j - \eps_{i,j} b_i - \delta_{i,j} a_i$, so that now $< a_i, a_j > = <b_i, a_j > = 0$.  Note that $\eps_{i,j}$ and $\delta_{i,j}$ are also small rational numbers.  Now modify $b_j$ by a similar procedure, so that $< a_i, b_j > = <b_i, b_j > = 0$.

Now repeat the procedure with the columns $a_{i+1}$, $b_{i+1}$.  After modifying every column we obtain a new $M$ which is in $\mathrm{Sp}\left(2g, \Q, J\right)$.  Furthermore, since at each stage the modifications to the columns are small, $M$ is still close to $M'$.
\end{proof}

We're now ready to prove theorem~\ref{T:PFConjugation}.  Throughout we will use the notation that if $v \in \Rg \setminus \{0\}$ then $\hat{v}$ denotes the normalization $v / ||v|| \in \Sg$.  If $M$ is a matrix with no zero columns, then $\hat{M}$ will denote the matrix obtained by normalizing each of the columns.

\begin{proof}[proof of theorem~\ref{T:PFConjugation}]
Let $M \in \SpL$ with dominating real eigenvalue $\lambda$ and associated eigenvector $v_M$.  Let $W$ be the co-dimension $1$ invariant subspace of $M$ with $v_M \notin W$, and $W^+$ the component of $\Rg \setminus W$ containing $v_M$.  Set $\eps$ to be the minimal distance in $\Sg$ from $\hat{v}_M$ to $W \cap \Sg$.  Then by lemma \ref{L:SPFExists} and corollary \ref{C:SuckedIn}, there exists $n \in \mathbb{N}$ and $A \in \SpL$ such that $A$ is Perron-Frobenius and the convex hull $H$ of the columns of $\widehat{A^n}$ has diameter less than $\eps$ (here we take $H \subseteq \Sg$ and measure distance in $\Sg$).

Let $\nu$ be in the interior of $H$.  Since $\Ug$ acts transitively on $\Sg$ (lemma \ref{L:UnitaryTransitive}), there is $S \in \Ug$ such that $S\nu = \hat{v_M}$.  As a real linear transformation, $S$ is orthogonal and hence $\mathrm{diam}\left(H\right) = \mathrm{diam}\left(S\left(H\right)\right)$.  Thus the columns of $S\widehat{A^n}$ are contained in $W^+$.  $\Ug$ is a subgroup of $\Sp$ (lemma~\ref{L:UnitaryIntersection}), so $S \in \Sp$.  Furthermore, by lemma \ref{L:SymplecticDense} we may perturb $S$ slightly so that now $S \in \SpLq$.  Set $B' = SA^n$, note $B' \in \SpLq$.  Scale $B'$ by an integer $\alpha$ so that $B = \alpha B' $ is a nonsingular, integral matrix.

Set $d = \det B$.  Then $B^{-1} = \frac{1}{d} C$, where $C$ is the adjugate of $B$.  In particular, $C$ is integral.

Consider the projection map $\mathrm{SL}\left(2g, \Z\right) \rightarrow \mathrm{SL}\left(2g, \Z / d\Z \right)$.  Since $\mathrm{SL}\left(2g, \Z / d\Z \right)$ is finite, for some $m \in \N$ we have $M^m$ in the kernel of this map.  Hence, we can write $M^m = I + d \Lambda$ for some integral matrix $\Lambda$.  Putting this together, we have:

\begin{align*}
B\inv M^m B & = \frac{1}{d} C \left(I + d \Lambda \right) B \\
& = I + C \Lambda B 
\end{align*}

In particular, $B\inv M^m B$ is integral.  By construction, the columns of $B$ give a basis satisfying the conditions of lemma \ref{L:PFBasis}, so for large enough $k \in \N$ we have $B\inv M^{m k} B$ is Perron-Frobenius and integral.  Furthermore $B\inv M^{m k} B$ is symplectic since $B$ is a scaled symplectic matrix.

\end{proof}

Using  theorems~\ref{T:AlgUnitsAreSymplectic} and~\ref{T:PFConjugation}, we can prove our main result, which we restate here:
\\
\begin{theorem}
Let $\lambda$ be a Perron unit, and let $L$ be any integral symplectic form.

Then for some $n \in \N$, $\lambda^n$ is the spectral radius of an integral Perron-Frobenius matrix which preserves the symplectic form $L$.
\end{theorem}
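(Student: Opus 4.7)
The plan is to chain together Theorem \ref{T:AlgUnitsAreSymplectic} with Theorem \ref{T:PFConjugation}, after first handling the mismatch between the canonical symplectic form $J$ used in Section \ref{S:CanonicalSympForm} and the prescribed integral symplectic form $L$.

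First I would form the self-reciprocal polynomial $q(t) = t^g p(t) p(t^{-1})$, where $p(t)$ is the minimal polynomial of $\lambda$. As noted in the introduction, the Perron unit hypothesis guarantees that $\lambda$ is the unique largest-modulus root of $q(t)$, with algebraic multiplicity one. Lemmas \ref{L:PreservesForm} and \ref{L:CharPoly} then produce an explicit integral matrix $N$ preserving $J$ whose characteristic polynomial is $q(t)$, so $N$ has $\lambda$ as a dominating real eigenvalue of multiplicity one and modulus greater than $1$.

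Next I would replace $N$ by an integral matrix $\tilde{N}$ preserving $L$ instead of $J$, at the cost of passing to a power of $N$. Over $\Q$, all non-degenerate alternating forms of rank $2g$ are equivalent, so there is $P \in \GL(2g, \Q)$ with $P^T J P = L$. The conjugate $P^{-1} N P$ preserves $L$ and has the same characteristic polynomial as $N$, but is generally not integral. To force integrality I would use the same congruence-subgroup trick that appears in the proof of Theorem \ref{T:PFConjugation}: pick a positive integer $d$ that clears all denominators appearing in $P$ and its adjugate, and observe that $\SL(2g, \Z/d\Z)$ is finite and contains the reduction of $N$ (recall symplectic matrices have determinant $1$). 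Hence some power $N^r$ reduces to $I$ modulo $d$. Writing $N^r = I + d\Lambda$ with $\Lambda$ integral, a short denominator-bookkeeping computation shows $\tilde{N} := P^{-1} N^r P$ is integral; by construction it preserves $L$ and still has a real eigenvalue $\lambda^r$ of multiplicity one and largest modulus.

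Finally I apply Theorem \ref{T:PFConjugation} to $\tilde{N} \in \SpL$. It yields $k \in \N$ and $B \in \GL(2g)$ such that $B^{-1} \tilde{N}^k B$ is an integral Perron-Frobenius matrix in $\SpL$, and its spectral radius is $\lambda^{rk}$. Setting $n = rk$ completes the proof. The main obstacle is the middle paragraph, where one must simultaneously switch the ambient form from $J$ to $L$ and restore integrality; everything else follows almost immediately from the machinery developed in Sections \ref{S:CanonicalSympForm} and \ref{S:ChangeBasisToPF}.
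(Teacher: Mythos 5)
Your proposal matches the paper's proof almost exactly: build the canonical-form matrix in $\SpJ$ with spectral radius $\lambda$, conjugate by a rational change of basis carrying $J$ to $L$, use the finiteness of $\SL\left(2g, \Z/d\Z\right)$ to pass to a power whose conjugate lands integrally in $\SpL$, and then invoke theorem~\ref{T:PFConjugation}. The only cosmetic difference is in the integrality bookkeeping: the paper first scales $P$ to an integral matrix $B$ and takes $d = \det B$, so that $B\inv\left(I + d\Lambda\right)B = I + \left(\mathrm{adj}\,B\right)\Lambda B$ is visibly integral (using $\left(\mathrm{adj}\,B\right)B = dI$), whereas you keep $P$ rational and fold the denominators of both $P$ and $P\inv$ into $d$; either version closes, but the paper's is slightly cleaner to write out.
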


\begin{proof}
Using the canonical form of section~\ref{S:CanonicalSympForm}, we can build a matrix $M \in \SpJ$ with $\lambda$ its spectral radius.  For some $B' \in \GL\left(2g, \Q \right)$ we have $\left(B'\right)^T J B' = L$.  Scale $B'$ by an integer $\alpha$ so that $B = \alpha B'$ is integral.  Now proceeding with the argument at the end of the proof for theorem~\ref{T:PFConjugation}, we get that $B\inv M^r B \in \SpL$.  Now we can apply theorm~\ref{T:PFConjugation} to obtain our result.

\end{proof}

We end this section by noting that if the matrix $M$ is not symplectic, we can modify the hypotheses slightly to achieve a result similar to theorm~\ref{T:PFConjugation}.  The proof uses similar ideas, but is actually significantly easier.
\\
\begin{corollary}\label{C:NoSymplectic}
Let $M$ be an integral, nonsingular matrix with a unique, real eigenvalue of largest modulus greater than 1.  Suppose also that this eigenvalue has algebraic multiplicity 1.  

Then $\exists n \in \N$ such that $M^n$ is conjugate to an integral Perron-Frobenius matrix.
\end{corollary}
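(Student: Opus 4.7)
The plan is to imitate the proof of Theorem~\ref{T:PFConjugation} with the symplectic constraints removed, which eliminates most of the work. Let $\lambda_M$, $v_M$, $W$, and $W^+$ be as in that proof. By Lemma~\ref{L:PFBasis} it suffices to build a nonsingular integer matrix $B$ whose columns all lie in $W^+$ with $v_M$ in the interior of the positive cone they generate, and then to show that some power $M^n$ is sent to an integer matrix by conjugation with $B$.

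The construction of $B$ is now easier because we no longer need to preserve any form and so are free to use any element of $\GL\left(2g,\R\right)$. Start with any integer Perron-Frobenius matrix $A$ (for example one with all positive entries), and note that the columns of $A^k$ cluster in an arbitrarily narrow cone around the Perron eigenvector of $A$ for large $k$. Pick any $R \in \GL\left(2g,\R\right)$ mapping this narrow cone into an open cone around $\hat{v}_M$ contained in $W^+$. Since $\GL\left(2g,\Q\right)$ is dense in $\GL\left(2g,\R\right)$ and the cone conditions are open, we may take $R$ to have rational entries; clearing denominators then gives an integer nonsingular matrix $B = \alpha R A^k$ whose columns satisfy the hypotheses of Lemma~\ref{L:PFBasis}. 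This bypasses the unitary and symplectic density machinery of Lemmas~\ref{L:UnitaryIntersection}--\ref{L:SymplecticDense} entirely.

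For integrality of the conjugation we apply the adjugate trick from the theorem: writing $d = \det B$ and $C = \mathrm{adj}(B)$, the matrix $B^{-1} M^m B = \tfrac{1}{d} C M^m B$ is integer as soon as $M^m \equiv I \pmod{d}$. This is the one place where the non-symplectic case actually differs from Theorem~\ref{T:PFConjugation}, and it is the main obstacle. In the symplectic case $\det M = 1$ put $M$ automatically into the finite group $\SL\left(2g,\Z/d\Z\right)$; here $M$ only lies in the finite group $\GL\left(2g,\Z/d\Z\right)$ under the additional condition $\gcd(\det M, d) = 1$, which is exactly what we need to conclude $M^m \equiv I \pmod{d}$ for some $m$. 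Fortunately the cone requirements on $B$ are open and leave ample freedom: by perturbing the rational $R$ (or adjusting $\alpha$) we can vary $d$ among a dense set of integers while preserving the cone conditions, and so we can arrange $\gcd(d, \det M) = 1$. Once this holds, the argument from the end of the proof of Theorem~\ref{T:PFConjugation} carries over unchanged: some $M^m = I + d\Lambda$ with $\Lambda$ integer, $B^{-1} M^m B = I + C\Lambda B$ is integer, and $B^{-1} M^{mk} B$ is an integer Perron-Frobenius matrix for all sufficiently large $k$ by Lemma~\ref{L:PFBasis}.
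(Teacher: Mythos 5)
Your proof correctly identifies the crux of the matter: the adjugate argument from Theorem~\ref{T:PFConjugation} needs $\gcd(\det B, \det M) = 1$ so that $M$ projects to an element of the finite group $\GL(r, \Z/d\Z)$. This is indeed exactly where the non-symplectic case differs, since $\det M$ is no longer automatically $\pm 1$. However, you do not actually close this gap. The claim that ``by perturbing the rational $R$ (or adjusting $\alpha$) we can vary $d$ among a dense set of integers while preserving the cone conditions, and so we can arrange $\gcd(d, \det M) = 1$'' is asserted but not proved --- and it is not an innocuous step. ``Dense set of integers'' is not meaningful as stated, and once you clear denominators by $\alpha$, the determinant $d = \alpha^{r}\det R$ is tied to the choice of $\alpha$, whose prime factorization is constrained by the denominators of $R$. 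It is not at all clear that a small perturbation of $R$ lets you dodge every prime dividing $\det M$.

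The paper resolves this with a specific construction rather than a density argument. Let $\delta = \det M$, choose a rational $B'$ (columns satisfying Lemma~\ref{L:PFBasis}), and choose $\alpha$ large and divisible by enough to ensure $\widetilde{B} = \alpha B'$ has integer entries \emph{all divisible by $\delta$}. Then set $B = \widetilde{B} + I$. For $\alpha$ large the columns of $B$ still point into $W^+$ near those of $B'$, so the cone conditions persist (they are open). The point of adding the identity is that in the Leibniz/cofactor expansion of $\det B$, every term involving an off-diagonal or a $\widetilde{B}$-entry is divisible by $\delta$, leaving $\det B \equiv 1 \pmod{\delta}$. This gives $\gcd(\det B, \delta) = 1$ by construction, with no unproved density claim. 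Your proof would be complete if you replaced the ``perturb $R$'' step with this $\widetilde{B} + I$ device (or any other explicit argument that forces $\det B$ to be a unit mod $\det M$).
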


\begin{proof}
Let $\delta = \det{M}$, and pick a $B' \in \SL \left(r, \Q\right)$ such that the columns of $B'$ satisfy the conditions of lemma \ref{L:PFBasis}.  Choose $\alpha \in \Z$ such that $\widetilde{B} = \alpha B'$ has integer entries and $\delta$ divides every entry of $\widetilde{B}$.  Assuming we also chose $\alpha$ to be large, we may set $B = \widetilde{B} + I$ and the columns of $B$ will still satisfy lemma \ref{L:PFBasis}.

Consider $d = \det{B}$.  Calculating the determinant by cofactor expansion, we see that $d = \left(sum~of~terms~divisible~by~ \delta \right) + 1$.  In particular, $\delta$ is relatively prime to $d$, so $M$ has a projection to $\GL\left(r, \Z / d\Z \right)$.  We now raise $M$ to a power $m$ so that $M^m = I + d \Lambda$ and proceed with the argument of theorem \ref{T:PFConjugation}.


\end{proof}

\section{Subshifts of Finite Type}\label{S:SoFT}

We will now apply the previous two sections to symbolic dynamics, in particular to subshifts of finite type.



Let $M$ be an $n \cross n$ matrix of 0's and 1's.  Let $A_n = \{ 1, 2, ..., n \}$, and form $\Sigma_n = A_n \cross \Z$.  We can think of $\Sigma_n$ as the set of all bi-infinite sequences in symbols from $A_n$, and we endow it with the product topology.  Now we form a subset $\Lambda_M \subseteq \Sigma_n$ by saying $\left(s_i\right) \in \Lambda_M$ if the $s_i, s_{i + 1}$ entry of $M$ is equal to $1$ for all $i$.  We can think of the $i, j$ entry of $M$ as telling us whether it is possible to transition from state $i$ to state $j$.  Now let $\sigma$ be the automorphism of $\Lambda_M$ obtained by shifting every sequence one place to the left.  The dynamical system $\left(\Lambda_M, \sigma \right)$ is called a \emph{subshift of finite type}, and can be thought of as a zero-dimensional dynamical system.  These dynamical systems have relatively easy to understand dynamics and are often used to model more complicated systems (for example, pseudo-Anosov automorphisms).

Let $M = \left[ m_{i,j}\right]$ be a square matrix with nonnegative, integer entries.  We form a directed graph $G$ from $M$ as follows.  $G$ has one vertex for each row of $M$.  Then connect the $i$-th vertex to the $j$-th vertex by $m_{i,j}$ edges, each directed from vertex $i$ to vertex $j$.  We call $M$ the \emph{transition matrix} for $G$.  If $M$ is Perron-Frobenius, then the graph $G$ will be strongly connected and the $i, j$-th entry of $M^k$ represents the number of paths of length $k$ from vertex $i$ to vertex $j$.  The spectral radius $\lambda$ of $M$ can be interpreted as the growth rate of the number of paths of length $k$ in $G$, i.e. $\displaystyle \lim_{k \rightarrow \infty} \frac{M^k}{\lambda^k} = P \neq 0$.

We now show how to go from an integral Perron-Frobenius matrix $M$ to another matrix with the same spectral radius whose entries are all $0$ or $1$.  This construction can also be found in \cite{F}.  Given a directed graph $G$ with Perron-Frobenius transition matrix $M$, label the edges of $G$ as $e_1, ..., e_n$ and the vertices $v_1, ..., v_m$.  From $G$, we form a directed graph $H$ as follows:  the vertex set  $w_1, ..., w_n$ of $H$ is in 1 - 1 correspondence with the edge set of $G$ ($w_i \leftrightarrow e_i$).   If the edge $e_i$ terminates at the vertex from which $e_j$ emanates, then we place an edge in $H$ from $w_i$ to $w_j$.  Let $N$ be the transition matrix of $H$.  Note that by construction, every entry of $N$ is either a $0$ or a $1$.
\\



A subgraph of a graph G is a \emph{cycle} if it is connected and every vertex has in and out valence $1$.  If $M$ is a transition matrix for $G$, it is possible to reformulate the calculation of the characteristic polynomial $p\left(t\right) = \det\left(t I - M \right)$ in terms of cycles in $G$ (see~\cite{BrH}):
\\
\begin{lemma}\label{L:CharPolyOfGraph}
Let $G$ be a graph with transition matrix $M$.  Denote by $\mathbf{C_i}$ the collection of all subgraphs which have $i$ vertices and are the disjoint union of cycles.  For $C \in \mathbf{C_i}$, denote by $\#\left(C\right)$ the number of cycles in $C$.  Then the characteristic polynomial $p\left(t\right) = \det\left(t I - M \right)$ is

$$p\left(t\right) = t^m + \sum_{i = 1}^m c_i t^{m - i}$$
where $m$ is the number of vertices in $G$ and
$$c_i = \sum_{C \in \mathbf{C_i}} \left(-1\right)^{\#\left(C\right)}$$

\end{lemma}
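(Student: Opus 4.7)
My plan is to derive the formula directly from the Leibniz expansion of the determinant and then reorganize the resulting sum according to cycle structure.

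I would start from the standard identity
\[
\det(tI - M) = \sum_{k=0}^{m} (-1)^k t^{m-k} \sum_{\substack{S \subseteq \{1,\dots,m\} \\ |S|=k}} \det(M_S),
\]
where $M_S$ denotes the principal submatrix of $M$ on the rows and columns indexed by $S$. It thus suffices to show, for each $k \geq 1$, that $(-1)^k \sum_{|S|=k} \det(M_S) = \sum_{C \in \mathbf{C_k}} (-1)^{\#(C)}$. For each fixed $S$ of size $k$, I would expand the determinant via Leibniz:
\[
\det(M_S) = \sum_{\sigma \in \mathrm{Sym}(S)} \mathrm{sgn}(\sigma) \prod_{s \in S} M_{s,\sigma(s)}.
\]

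The key observation is that a permutation $\sigma$ of $S$, via its cycle decomposition, is exactly the same data as a choice of vertex-disjoint directed cycles covering $S$; the product $\prod_{s} M_{s,\sigma(s)}$ is non-zero precisely when each orbit of $\sigma$ traces out a genuine directed cycle in $G$. So as $(S,\sigma)$ ranges over pairs with non-zero contribution, we recover exactly the subgraphs $C \in \mathbf{C_k}$. The sign accounting is then routine: an $\ell$-cycle has sign $(-1)^{\ell-1}$, so a permutation on $k$ elements consisting of $r$ disjoint cycles has sign $(-1)^{k-r}$; combining with the outer $(-1)^k$ collapses this to $(-1)^{r} = (-1)^{\#(C)}$, giving the claimed formula.

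The only step requiring any care is the bijection between $(S,\sigma)$ pairs and subgraphs in $\mathbf{C_k}$, where one must be mindful of multi-edges. The argument as written applies cleanly when $M$ is a $0/1$ matrix, which is the case of interest since the lemma is applied to the transition matrix $N$ built in the previous construction; for a general nonnegative integer $M$ one simply reinterprets $\mathbf{C_k}$ as counting edge-labeled cycle covers weighted by $\prod_s M_{s,\sigma(s)}$. There is no real obstacle here, since this is essentially the classical digraph-theoretic reading of the coefficients of a characteristic polynomial.
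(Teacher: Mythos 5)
The paper does not actually prove this lemma; it states it with a citation to Brouwer and Haemers. Your argument is correct and is the standard proof that reference gives: write the coefficient of $t^{m-k}$ in $\det(tI-M)$ as $(-1)^k$ times the sum of $k\times k$ principal minors, expand each minor by Leibniz, observe that a permutation $\sigma$ of $S$ contributes only when each of its orbits traces a directed cycle in $G$, and track signs. The sign bookkeeping is right: a permutation on $k$ letters with $r$ cycles (counting fixed points as $1$-cycles, consistent with the paper's convention that a loop is a cycle) has sign $(-1)^{k-r}$, and multiplying by the outer $(-1)^k$ gives $(-1)^r=(-1)^{\#(C)}$. Your closing remark on multi-edges is also handled correctly: for a nonnegative integer matrix the weight $\prod_{s\in S} M_{s,\sigma(s)}$ is exactly the number of distinct subgraphs in $\mathbf{C_k}$ whose edge set realizes the cycle type of $\sigma$ on the vertex set $S$, so the unweighted sum over $\mathbf{C_k}$ in the lemma coincides with the weighted sum over pairs $(S,\sigma)$, and no restriction to $0/1$ matrices is needed.
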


Using this formula, we can prove that the characteristic polynomial of $N$ (as above) has a nice form, and in particular that the spectral radius of $N$ is the same as the spectral radius of $M$.
\\
\begin{theorem}
Let $M$ be the transition matrix for a graph with $m$ vertices and let $N$ be an $n \cross n$ matrix of 0's and 1's built from $M$ by the construction above.  

Then if $p\left(t\right) = \det\left(t I - M \right)$ is the characteristic polynomial of $M$, the characteristic polynomial of $N$ is $q\left(t\right) = t^{n - m} p\left(t\right)$
\end{theorem}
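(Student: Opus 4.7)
The plan is to express $M$ and $N$ as products $XY$ and $YX$ of rectangular $0/1$ matrices and then apply the standard identity relating the characteristic polynomials of $XY$ and $YX$. Let $V = \{v_1, \dots, v_m\}$ be the vertex set of $G$ and $E = \{e_1, \dots, e_n\}$ its edge set. Define an $m \times n$ matrix $X$ by $X_{v,e} = 1$ if $e$ emanates from $v$ and $0$ otherwise, and an $n \times m$ matrix $Y$ by $Y_{e,w} = 1$ if $e$ terminates at $w$ and $0$ otherwise.

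First I would verify directly from the definitions that $M = XY$ and $N = YX$. Indeed, $(XY)_{v,w}$ counts edges that emanate from $v$ and terminate at $w$, which is the $(v,w)$-entry of the transition matrix $M$; and $(YX)_{e,f}$ equals $1$ precisely when there is a vertex at which $e$ terminates and $f$ emanates, which is exactly the rule that places an edge from $w_e$ to $w_f$ in $H$.

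Next I would invoke the rectangular determinant identity
\[
t^n \det(tI_m - XY) \;=\; t^m \det(tI_n - YX),
\]
valid for any $m \times n$ matrix $X$ and $n \times m$ matrix $Y$. A one-line proof: compute $\det\bigl(\begin{smallmatrix} tI_m & X \\ Y & I_n \end{smallmatrix}\bigr)$ in two different ways via Schur complements; eliminating the $I_n$ block gives $\det(tI_m - XY)$, while eliminating the $tI_m$ block gives $t^{m-n}\det(tI_n - YX)$ (first for $t \neq 0$, then extended by polynomial equality). Assuming $n \geq m$, which holds whenever every vertex of $G$ has outdegree at least $1$ (in particular for a strongly connected graph with $m \geq 2$), dividing by $t^m$ produces
\[
q(t) \;=\; \det(tI_n - N) \;=\; t^{n-m}\det(tI_m - M) \;=\; t^{n-m} p(t),
\]
which is the assertion of the theorem.

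The main obstacle is simply to state the rectangular determinant identity cleanly; once that is in hand, the theorem is immediate. An alternative approach would use the cycle-decomposition formula of Lemma~\ref{L:CharPolyOfGraph}, but it is considerably more delicate: a single cycle in $H$ can encode a non-simple Eulerian closed walk in $G$ (for instance, two cycles in $G$ sharing a vertex form a ``figure eight'' that can be traversed as one cycle or as two cycles in $H$), so the would-be correspondence between disjoint cycle unions of a given size is not a bijection, and one must verify that the signs $(-1)^{\#(C)}$ combine to give the same alternating sum on both sides. The matrix factorization argument sidesteps this combinatorial bookkeeping entirely.
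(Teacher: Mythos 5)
Your proof is correct and takes a genuinely different route from the paper. The paper works combinatorially: it applies Lemma~\ref{L:CharPolyOfGraph} to both $G$ and $H$, and then constructs a sign-reversing involution (the ``edge swap'') on the set $\mathbf{D_i^*}$ of disjoint cycle unions in $H$ that do not project to disjoint cycle unions in $G$, showing these contributions cancel in pairs so that only the terms corresponding to genuine cycle covers of $G$ survive. Your approach instead factors $M = XY$ and $N = YX$ through the tail/head incidence matrices and invokes the Sylvester-type identity $t^n\det(tI_m - XY) = t^m\det(tI_n - YX)$, which makes the conclusion immediate. The trade-off is roughly this: the factorization argument is shorter, more conceptual, and sidesteps the subtle combinatorics you correctly flag (a single cycle in $H$ may project to a closed walk in $G$ that is not a simple cycle), while the paper's argument is self-contained given its Lemma~\ref{L:CharPolyOfGraph} and exhibits the cancellation explicitly, which has some expository value in a paper otherwise concerned with Perron--Frobenius graph dynamics. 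Your side remark that ``the would-be correspondence $\dots$ is not a bijection'' is essentially the observation that motivates the paper's edge-swap involution; the paper does resolve it, but by a longer route. One small point worth making explicit in your write-up: the hypothesis $n \geq m$ is automatic here, since $M$ is a transition matrix of a graph arising from the earlier construction (Perron--Frobenius, strongly connected), so every vertex has out-degree at least one and hence $n \geq m$; alternatively, the identity $t^n\det(tI_m - XY) = t^m\det(tI_n - YX)$ holds for all $m,n$, and the statement $q(t) = t^{n-m}p(t)$ then just requires noting $t^{m-n} \mid p(t)$ when $n < m$, which again cannot occur in the setting at hand.
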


\begin{proof}
Let $G$ be the graph associated to $M$, and $H$ the graph associated with $N$.  Order the vertices of $G$, and for each vertex $v$ fix a lexicographic order of (\emph{in-edge}, \emph{out-edge}) pairs of edges incident to $v$.  Let $\mathbf{D_i}$ be the collection of subgraphs of $H$ which can be written as a union of disjoint cycles with $i$ total vertices.  For $D \in \mathbf{D_i}$, there is a canonical projection of $D$ to a collection of paths in $G$ (using the fact that vertices in $H$ come from edges in $G$).  Let $\mathbf{D_i^*}$ be the subset of $\mathbf{D_i}$ containing those disjoint unions of cycles in $H$ which do not project to a disjoint union of cycles in $G$.  We will show that there is a bijection between elements of $\mathbf{D_i^*}$ having an odd number of components and elements of $\mathbf{D_i^*}$ having an even number of components.

Let $D \in \mathbf{D_i^*}$ and say $D$ has an odd number of components.  Call $C$ its projection to a collection of paths in $G$.  Since $C$ is not a disjoint union of cycles, there must be vertices of $G$ that are either visited by two different paths in $C$ and/or are visited twice by the same path.  Choose $v$ to be the minimal such vertex in the ordering of vertices of $G$, and note that $v$ must have in-valence and out-valence both of at least $2$.  Choose two in/out-edge pairs, $\left(e, f\right)$ and $\left(e', f'\right)$, such that each pair occurs in some path in $C$ and so that they are minimal among such pairs in the ordering of edges incident to $v$.  Note that $D$ contains vertices in $H$ corresponding to $e, e', f, f'$ and must contain edges from $e$ to $f$ and from $e'$ to $f'$.  Build $D' \in \mathbf{D_i^*}$ by letting $D'$ have the same vertex collection as $D$, but instead of containing edges from $e$ to $f$ and from $e'$ to $f'$ it contains edges from $e$ to $f'$ and $e'$ to $f$ (call this operation an \emph{edge swap}).

If the pairs $\left(e, f\right)$ and $\left(e', f'\right)$ are both part of the same cycle in $D$, then $D'$ will have one more component than $D$.  If they are part of two different cycles, then $D'$ will have one less component.  In either case, $D'$ has an even number of components and we have constructed a well-defined map from elements of $\mathbf{D_i^*}$ having odd components to elements having even components.  Note also that the projection $C'$ of $D'$ still visits $v$ twice, and contains in/out-edge pairs $\left(e, f'\right)$ and $\left(e', f\right)$.  Thus we can define the inverse of this map in exactly the same way, and hence we have a bijection.

Because of the bijection we built above, we see that disjoint unions of cycles in $\mathbf{D_i^*}$ cancel out when $q\left(t\right)$ when it is computed using lemma~\ref{L:CharPolyOfGraph}.  Elements of $\mathbf{D_i} \setminus \mathbf{D_i^*}$ are in bijective correspondence with cycles in $\mathbf{C_i}$, so we get our conclusion.
\end{proof}


Finally, we have:
\\
\begin{theorem}
Let $\lambda$ be a Perron unit.  Then there is $k \in \N$ such that $\log\left(\lambda^k\right)$ is the topological entropy of some subshift of finite type. 
\end{theorem}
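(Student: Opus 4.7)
The plan is to assemble the tools already established in the paper into a short chain. Given a Perron unit $\lambda$, I would first apply the Main Theorem (with any convenient choice of integral symplectic form $L$, since the symplectic structure plays no role in the conclusion) to obtain an integer $n \in \N$ and an integral Perron-Frobenius matrix $M$ whose spectral radius equals $\lambda^n$.

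Next, I would feed $M$ into the graph construction described at the opening of this section: view $M$ as the transition matrix of a directed multigraph $G$, and let $N$ be the transition matrix of the associated graph $H$ whose vertices are the edges of $G$. By construction, $N$ is a square matrix all of whose entries are $0$ or $1$. The preceding theorem shows that the characteristic polynomial of $N$ is $t^{n-m} p(t)$, where $p(t)$ is the characteristic polynomial of $M$, so $M$ and $N$ share the same nonzero spectrum. In particular, the spectral radius of $N$ is again $\lambda^n$. Since $G$ is strongly connected (it has Perron-Frobenius transition matrix $M$), so is $H$, and therefore $N$ is itself Perron-Frobenius.

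Finally, the $\{0,1\}$-matrix $N$ defines a subshift of finite type $(\Lambda_N, \sigma)$ exactly as at the beginning of this section. Invoking the classical result of symbolic dynamics that the topological entropy of a subshift of finite type equals the logarithm of the spectral radius of its transition matrix, we conclude that the topological entropy of $(\Lambda_N, \sigma)$ is $\log(\lambda^n)$. Setting $k = n$ gives the theorem.

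There is no substantive obstacle: the theorem is essentially a packaging corollary that combines the Main Theorem, the line-graph construction turning a nonnegative integral Perron-Frobenius matrix into a $\{0,1\}$ one of the same spectral radius, and the standard identification of topological entropy for subshifts of finite type with the log of the spectral radius of the transition matrix.
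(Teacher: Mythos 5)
Your proof is correct and follows essentially the same route as the paper: the paper's proof is a single sentence citing Theorems~\ref{T:PFConjugation} and~\ref{T:AlgUnitsAreSymplectic} (which together yield the Main Theorem you invoke) plus the Fathi--Laudenbach--Po\'enaru reference for the entropy of a subshift of finite type. You simply make explicit the intermediate step of passing to the $\{0,1\}$ transition matrix $N$ via the directed-graph construction earlier in the section, which is exactly the content the paper implicitly relies on.
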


This follows directly from theorems~\ref{T:PFConjugation},~\ref{T:AlgUnitsAreSymplectic}, and comments of Fathi, Laudenbach, and Po\'earu on subshifts of finite type (see~\cite{FS}).


\addcontentsline{toc}{section}{Bibliography}

Department of Mathematics,\\ University of California, Santa Barbara\\ Santa Barbara,
CA 93106.\\
\noindent Email:~rackermann@math.ucsb.edu\\[\baselineskip]


\begin{thebibliography}{CFP2}

\bibitem{BH}
M. Bestvina and M. Handel.  Train-tracks for surface homeomorphisms.  \emph{Topology}, 34(1):109-140, 1995.

\bibitem{B}
Philip Boyland.  Topological methods in surface dynamics.  \emph{Topology and its Applications}, 58:223-298, 1994.

\bibitem{BrH}
Andries H. Brouwer and Willem H. Haemers.  Spectra of graphs.  Course notes, 2003.  \texttt{http://homepages.cwi.nl/$\sim$aeb/math/ipm.pdf}

\bibitem{CB}
Andrew J. Casson and Steven A. Bleiler.  \emph{Automorphisms of Surfaces after Nielsen and Thurston}, volume 9 of \emph{London Mathematical Society Student Texts}.  Cambridge University Press, Cambridge, UK, 1988.

\bibitem{F}
Benson Farb.  Some problems on mapping class groups and moduli space, 2008.  Preprint: arXiv:math/0606432v1

\bibitem{FLM}
Benson Farb, Christopher J. Leininger, and Dan Margalit.  Small dilatation pseudo-Anosovs and 3-manifolds, 2009.  Preprint arXiv:0905.0219v1.

\bibitem{FS}
Albert Fathi, Francois Laudenbach, and Valentin Po\'earu.  \emph{Travaux de Thurston sur les surfaces}, volume 66 of \emph{Ast\'erisque}.  Soci\'et\'e Math\'ematique de France, Paris, 1979.  S\'eminaire Orsay, With an English summary.

\bibitem{FM}
Benson Farb and Dan Margalit.  \emph{A Primer on mapping class groups.}  To appear in Princeton Mathematical Series.  Princeton Univ. Press, 2011.  Version 5.0.  \texttt{www.math.utah.edu/$\sim$margalit/primer/}

\bibitem{F}
John M. Franks.  \emph{Homology and Dynamical Systems}, volume 49 of \emph{Regional Conference Series in Mathematics}.  American Mathematical Society, Providence, RI, 1982.

\bibitem{Grove}
Larry C. Grove.  \emph{Classical Groups and Geometric Algebra}, volume 39 of \emph{Graduate Studies in Mathematics}.  American Mathematical Society, Providence, RI, 2002.

\bibitem{H}
Tom Howard.  Representations of monomial algebracs have poly-exponential complexities, 2010.  Preprint arXiv:1011.3554v2.

\bibitem{LT}
Erwan Lanneau, Jean-Luc Thiffeault.  On the minimum dilatation of pseudo-Anosov homeomorphisms on surfaces of small genus, 2010.  Preprint arXiv:0905.1302v3.

\bibitem{MS}
Dusa McDuff and Dietmar Salamon.  \emph{Introduction to Symplectic Topology}.  Oxford University Press, Oxford, UK, 1995.

\bibitem{P}
R. C. Penner and J. L. Harer.  \emph{Combinatorics of train tracks}, volume 125 of \emph{Annals of Mathematics Studies}.  Princeton University Press, Princeton, NJ, 1992.

\bibitem{Thu}
William P. Thurston.  On the geometry and dynamics of homeomorphisms of surfaces, \emph{Bull. Amer. Math. Soc. (N.S.)}, 19(2):417-431, 1988.


\end{thebibliography}
\end{document}